\DeclareMathOperator{\Hom}{Hom}
\DeclareMathOperator{\Rad}{Rad}
\DeclareMathOperator{\rad}{rad}
\DeclareMathOperator{\Der}{Der}
\DeclareMathOperator{\Span}{Span}
\DeclareMathOperator{\Leib}{Leib}
\theoremstyle{plain} 
\newtheorem{theorem}{Theorem}[section]
\newtheorem{corollary}[theorem]{Corollary}
\newtheorem{conjecture}[theorem]{Conjecture}
\newtheorem{proposition}[theorem]{Proposition}
\newtheorem{definition}[theorem]{Definition}
\newtheorem{remark}[theorem]{Remark}
\newtheorem{example}[theorem]{Example}
\numberwithin{equation}{section}
\newcommand{\U}{\mathbf{U}}
\newcommand{\Lie}{\mathbf{Lie}}
\newcommand{\Lb}{\mathbf{Lb}}
\begin{document}
\title[Leibniz $n$-algebras from the category $\U_n(\Lb)$]{Some theorems on Leibniz $n$-algebras from \\ the category $\U_n(\Lb)$}
\author{M.S. Kim}
\address{[Min Soo Kim] Vanderbilt University, Nashville, TN 37235}
\email{min.soo.kim@vanderbilt.edu}
\author{R. Turdibaev}
\address{[Rustam Turdibaev] Inha University in Tashkent, 100170 Tashkent, Uzbekistan}
\email{r.turdibaev@inha.uz}

\thanks{The first author was supported by the National Science Foundation, grant number 1658672.}
\begin{abstract}
We study the Leibniz $n$-algebra $\U_n(\mathfrak{L})$, whose multiplication is defined via the bracket of a Leibniz algebra $\mathfrak{L}$ as $[x_1,\dots,x_n]=[x_1,[\dots, [x_{n-2},[x_{n-1},x_n]]\dots]]$. We show that $\U_n(\mathfrak{L})$ is simple if and only if $\mathfrak{L}$ is a simple Lie algebra. An analogue of Levi's theorem for Leibniz algebras in $\U_n(\Lb)$ is established and it is proven that the Leibniz $n$-kernel of $\U_n(\mathfrak{L})$ for any semisimple Leibniz algebra $\mathfrak{L}$ is the $n$-algebra $\U_n(\mathfrak{L})$.
\end{abstract}
\subjclass[2010]{17A32, 17A40, 17A42}
\keywords{Leibniz $n$-algebra, Leibniz $n$-kernel, simple Leibniz $n$-algebra, Levi decomposition}
\maketitle

\section*{Introduction}
Leibniz algebras were introduced by A. Bloh \cite{Bloh} in 1960s as algebras satisfying the Leibniz identity:
\[[[x,y],z]=[[x,z],y]+[x,[y,z]].\]
The Leibniz identity becomes the Jacobi identity if one uses skew-symmetry and in fact, the category of Lie algebras ($\Lie$) is a full subcategory of the category of Leibniz algebras ($\Lb$). However, Leibniz algebras did not gain popularity until J.-L. Loday  \cite{LodayCyclic} rediscovered them in 1990s while lifting classical Chevalley-Eilenberg boundary map to the tensor module of a Lie algebra that produces another (Leibniz) chain complex. Leibniz algebras play an important role in Hochschild homology theory \cite{LodayCyclic} and Nambu mechanics (\cite{Nam}, \cite{DaTa}).

In 1985 Filippov \cite{Fil} defined a notion of an $n$-Lie algebra that generalizes Lie algebras by the arity of the bracket from two to $n$, preserving skew symmetry and satisfying the identity:
\[[x_1,\ldots,x_n],y_1,\ldots,y_{n-1}]=\sum^{n}_{i=1}[x_1,\ldots,x_{i-1},[x_i,y_1,\ldots,y_{n-1}],x_{i+1},\ldots,x_n].\] For $n=3$ an example of a $3$-Lie algebra appears even earlier in 1973 \cite{Nam}, where the multiplication for a triple of classical observables on the three-dimensional phase space $\mathbb{R}^3$ is given by the Jacobian and studied further in \cite{Takh}. An $n$-Lie algebra is also called a \textit{Filippov algebra} or a \textit{Nambu algebra} in numerous papers and is relevant in string and membrane theory \cite{BaLa1, BaLa2}.  

The generalization of Lie, Leibniz and $n$-Lie algebras is the so called \textit{Leibniz $n$-algebras}, introduced in \cite{CLP}. Leibniz $n$-algebras are defined with the above identity but are not skew-symmetric with the $n$-ary bracket. As in the case of Leibniz algebras $(n=2)$, the ideal generated by the $n$-ary brackets, where two elements are the same is called the \textit{Leibniz $n$-kernel} and the quotient $n$-algebra by this ideal is an $n$-Lie algebra. 

While the structure theory of Leibniz and $n$-Lie algebras has been well-developed, the same cannot be said about $n$-Leibniz algebras. Although a notion of a solvable radical and some other classical properties of the radical of Leibniz $n$-algebras are obtained in \cite{Frattini}, Levi decomposition for Leibniz $n$-algebras has not yet been established, while for $n$-Lie and Leibniz algebras they are proven by Ling \cite{Ling} and Barnes \cite{Barnes}, respectively. Even in small dimensions, there is only a partial classification of two-dimensional Leibniz $n$-algebras over complex numbers in \cite{Adashev} and \cite{5authors}. There are also some instances when results from Leibniz and $n$-Lie algebras do not generalize to Leibniz $n$-algebras. For example, the Leibniz $n$-kernel can coincide with the $n$-algebra which is possible for $n=2$ if and only if the Leibniz algebra is abelian. Also, there are examples of Leibniz $n$-algebras with invertible right multiplication operators and Cartan subalgebras of different dimensions \cite{cartan}, which are not in accordance with the corresponding results in Lie, Leibniz and $n$-Lie algebras. 

The motivation of this paper is to continue developing theory on Leibniz $n$-algebras and find analogs and variations in relation to results established in $n$-Lie algebras. One such direction of study is through a ``forgetful'' functor $\U_n$ from the category of Leibniz algebras $\Lb$ to Leibniz $n$-algebras ${}_n\Lb$. Given a Leibniz algebra $\mathfrak{L}$, authors of \cite{CLP} introduce a Leibniz $n$-algebra structure on the same vector space via $[x_1,\dots,x_n]=[x_1,[\dots, [x_{n-2},[x_{n-1},x_n]]\dots]]$. We prove that $\U_n(\Lb)$ is not a full subcategory of the category ${}_n\Lb$ and our goal is to investigate the structure of Leibniz $n$-algebras in the category $\U_n(\Lb)$.

Section 1 is an introduction to Leibniz algebras, Leibniz $n$-algebras and the forgetful functor $\U_n(\mathfrak{L})$.  Using the recent results of \cite{bipartite} we describe all ideals of a semisimple Leibniz algebra $\mathfrak{L}$ and compare them with the ideals of $\U_n(\mathfrak{L})$ in Section \ref{main}. 

In Section \ref{kernel}, we discuss the Leibniz $n$-kernel of a Leibniz $n$-algebra and show that for a Leibniz $3$-algebra from $\U_3(\Lb)$, the subspace spanned by the ``square'' of the elements, $\Span\{[x,x,y] \mid x,y\in \mathfrak{L} \}$ is the Leibniz $3$-kernel. For a Leibniz $n$-algebra with an invertible right multiplication operator it is known by \cite{cartan} that the Leibniz $n$-kernel is the whole $n$-algebra. We present a short proof of this fact and show that the converse does not work for $\U_n(\mathfrak{L})$, where $\mathfrak{L}$ is any semisimple Leibniz algebra. 

Section \ref{main} introduces the notions of simplicity and semisimplicity for a Leibniz $n$-algebra in a similar way as in $n$-Lie algebras. For a non-Lie Leibniz algebras $(n=2)$, both simplicity and semisimplicity are defined differently, taking into account the Leibniz kernel, which is a nontrivial ideal.  We establish that a Leibniz $n$-algebra $\U_n(\mathfrak{L})$ is simple, if and only if $\mathfrak{L}$ is a simple Lie algebra. Furthermore, we prove an analogue of Levi decomposition and establish that a Leibniz $n$-algebra $\U_n(\mathfrak{L})$ is semisimple if and only if $\mathfrak{L}$ is a semisimple Lie algebra. 

All vector spaces, modules, algebras and $n$-algebras in this work are finite-dimensional over a field $\mathbb{K}$ of characteristic zero.  

\section{Preliminaries} 
\subsection{Leibniz algebras}

A Leibniz algebra $\mathfrak{L}$ is a vector space over a field $\mathbb{K}$ with a bilinear bracket $[-,-]: \mathfrak{L} \times \mathfrak{L} \to \mathfrak{L}$ that satisfies the \textit{Leibniz identity}:
\[[[x,y],z]=[[x,z],y]+[x,[y,z]].\]
If the Leibniz algebra $\mathfrak{L}$ is skew-symmetric i.e. $[x,x] = 0$ for all $x \in \mathfrak{L}$, then the Leibniz identity becomes the Jacobi identity. A linear space spanned by the squares of the elements of a Leibniz algebra $\mathfrak{L}$ constitutes a two-sided ideal called the \textit{Leibniz kernel} and is denoted by $\Leib(\mathfrak{L})$. The quotient algebra $\mathfrak{L}/\Leib(\mathfrak{L})$ is a Lie algebra, called the \textit{liezation} of $\mathfrak{L}$. The Leibniz kernel is trivial if and only if the Leibniz algebra is abelian (that is $[\mathfrak{L},\mathfrak{L}]=\{0\}$) and $\Leib(\mathfrak{L})=\mathfrak{L}$  if and only if $\mathfrak{L}$ is a Lie algebra. Therefore, a non-Lie Leibniz algebra always admits a nontrivial ideal - the Leibniz kernel and we use this definition of simplicity for a non-abelian Leibniz algebra introduced in \cite{Dzumadil'daev}.

\begin{definition}\label{simpleLeibniz}
	A Leibniz algebra $\mathfrak{L}$ is called simple if the only ideals are $\{0\}, \Leib(\mathfrak{L})$ and $\mathfrak{L}$.
\end{definition}

\begin{definition}
	A subalgebra $K$ of $\mathfrak{L}$ is solvable if  there exists $m \in \mathbb{N}$ such that $K^{(m)}=\{0\}$, where $K^{(n)}=[K^{(n-1)},K^{(n-1)}]$ for any positive integer $n$ and $K^{(0)} = K$.
\end{definition}

As in Lie algebra theory, the sum of solvable ideals of a Leibniz algebra is a  solvable ideal. 
\begin{definition}
	The maximal solvable ideal of a Leibniz algebra $\mathfrak{L}$ is called the radical of  $\mathfrak{L}$, denoted $\rad(\mathfrak{L})$. 
\end{definition}

As in the definition of simple Leibniz algebras, the notion of semisimplicity of a Leibniz algebra is different from semisimplicity of Lie algebras. 
\begin{definition}
	A Leibniz algebra $\mathfrak{L}$ is called semisimple if $\rad(\mathfrak{L})=\Leib(\mathfrak{L})$.
\end{definition}

Note that a simple Leibniz algebra is also semisimple. Semisimple Leibniz algebras satisfy $[\mathfrak{L},\mathfrak{L}]=\mathfrak{L}$, i.e. they are perfect. There is an analog of Levi's theorem that describes the structure of Leibniz algebra. 

\begin{theorem}\cite{Barnes}
	Let $\mathfrak{L}$ be a Leibniz algebra. Then $\mathfrak{L} = \mathfrak{g} \ltimes \rad(\mathfrak{L})$, where $\mathfrak{g}$ is a semisimple Lie subalgebra of $\mathfrak{L}$.
\end{theorem}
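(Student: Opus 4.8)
The plan is to reduce the statement to the classical Levi theorem for Lie algebras by passing to the liezation $\mathfrak{L}/\Leib(\mathfrak{L})$, and then to lift the resulting semisimple Levi factor back to $\mathfrak{L}$ across the Leibniz kernel. First I would record the structural facts about the Leibniz kernel that make this possible. Setting $y=z$ in the Leibniz identity gives $[x,[y,y]]=0$, so $[\mathfrak{L},\Leib(\mathfrak{L})]=\{0\}$; in particular $[\Leib(\mathfrak{L}),\Leib(\mathfrak{L})]=\{0\}$, so $\Leib(\mathfrak{L})$ is an abelian, hence solvable, ideal, and therefore $\Leib(\mathfrak{L})\subseteq\rad(\mathfrak{L})$. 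Writing $Q=\mathfrak{L}/\Leib(\mathfrak{L})$ for the liezation and $\pi\colon\mathfrak{L}\to Q$ for the projection, I would then check that $\rad(Q)=\rad(\mathfrak{L})/\Leib(\mathfrak{L})$: the image $\pi(\rad(\mathfrak{L}))$ is a solvable ideal of $Q$, while the preimage of any solvable ideal of $Q$ is an extension of a solvable algebra by the abelian ideal $\Leib(\mathfrak{L})$, hence solvable and contained in $\rad(\mathfrak{L})$.

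By the classical Levi theorem the Lie algebra $Q$ decomposes as $Q=\mathfrak{s}\oplus\rad(Q)$ with $\mathfrak{s}$ a semisimple Lie subalgebra. Let $\mathfrak{h}=\pi^{-1}(\mathfrak{s})$. Since $\mathfrak{s}$ is skew-symmetric, every square $[a,a]$ with $a\in\mathfrak{h}$ projects to $0$ and so lies in $\Leib(\mathfrak{L})$; we thus obtain a short exact sequence of Leibniz algebras
\[
0\longrightarrow \Leib(\mathfrak{L})\longrightarrow \mathfrak{h}\stackrel{\pi}{\longrightarrow}\mathfrak{s}\longrightarrow 0,
\]
in which the kernel $\Leib(\mathfrak{L})$ is abelian and satisfies $[\mathfrak{h},\Leib(\mathfrak{L})]=\{0\}$. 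Hence $\Leib(\mathfrak{L})$ is a finite-dimensional module over the semisimple Lie algebra $\mathfrak{s}$, with the only nontrivial action induced by $v\mapsto[v,a]$ for $a\in\mathfrak{h}$ (the opposite action $[a,v]$ being trivial).

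The crux, and the step I expect to be the main obstacle, is to split this extension by a \emph{subalgebra}. I would pick a linear section $\tau\colon\mathfrak{s}\to\mathfrak{h}$ and measure its failure to be a homomorphism by $f(x,y)=[\tau x,\tau y]-\tau[x,y]\in\Leib(\mathfrak{L})$. The Leibniz identity in $\mathfrak{h}$ forces $f$ to be a $2$-cocycle, and replacing $\tau$ by $\tau+\varphi$ for a linear $\varphi\colon\mathfrak{s}\to\Leib(\mathfrak{L})$ alters $f$ by a coboundary. The delicate point is that $f$ is only a \emph{Leibniz} cocycle coming from a non-skew-symmetric bracket; however, because the left action $[\mathfrak{h},\Leib(\mathfrak{L})]=\{0\}$ is trivial, the terms $[\tau x,\varphi y]$ vanish and both the cocycle and coboundary conditions collapse to the ordinary Chevalley--Eilenberg ones for $\mathfrak{s}$ acting on $\Leib(\mathfrak{L})$. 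Since $\mathfrak{s}$ is semisimple and $\Leib(\mathfrak{L})$ is finite-dimensional, Whitehead's second lemma (equivalently, complete reducibility) gives vanishing of the relevant second cohomology, so some choice of $\tau$ is a genuine homomorphism. Its image $\mathfrak{g}=\tau(\mathfrak{s})$ is then a subalgebra isomorphic to $\mathfrak{s}$; evaluating the homomorphism property on $x=y$ shows $[\tau x,\tau x]=\tau[x,x]=0$, so $\mathfrak{g}$ is skew-symmetric, i.e.\ a Lie algebra.

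Finally I would assemble the decomposition. By construction $\mathfrak{g}\cap\Leib(\mathfrak{L})=\{0\}$, and any element of $\mathfrak{g}\cap\rad(\mathfrak{L})$ projects into $\mathfrak{s}\cap\rad(Q)=\{0\}$, hence lies in $\mathfrak{g}\cap\Leib(\mathfrak{L})=\{0\}$; thus $\mathfrak{g}\cap\rad(\mathfrak{L})=\{0\}$. A dimension count gives $\dim\mathfrak{g}=\dim\mathfrak{s}=\dim\mathfrak{L}-\dim\rad(\mathfrak{L})$, so $\mathfrak{L}=\mathfrak{g}\oplus\rad(\mathfrak{L})$ as vector spaces. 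Since $\rad(\mathfrak{L})$ is an ideal, this is the desired semidirect decomposition $\mathfrak{L}=\mathfrak{g}\ltimes\rad(\mathfrak{L})$ with $\mathfrak{g}$ a semisimple Lie subalgebra.
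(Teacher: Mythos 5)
The paper does not actually prove this statement---it is quoted from Barnes \cite{Barnes}---so I am comparing your argument with the standard one rather than with a proof in the text. Your overall strategy (pass to the liezation $Q=\mathfrak{L}/\Leib(\mathfrak{L})$, apply the classical Levi theorem there, and then split the extension $0\to\Leib(\mathfrak{L})\to\mathfrak{h}\to\mathfrak{s}\to 0$ using $[\mathfrak{h},\Leib(\mathfrak{L})]=\{0\}$) is sound, and the bookkeeping at the beginning and end (that $\Leib(\mathfrak{L})\subseteq\rad(\mathfrak{L})$, that $\rad(Q)=\rad(\mathfrak{L})/\Leib(\mathfrak{L})$, and the final dimension count) is correct.

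The gap is exactly at the step you flagged as the crux. Writing $V=\Leib(\mathfrak{L})$ and $f(x,y)=[\tau x,\tau y]-\tau[x,y]$, the cochain $f$ is \emph{not} alternating: its symmetric part is controlled by $f(x,x)=[\tau x,\tau x]$, which is precisely the obstruction to $\tau(\mathfrak{s})$ being a Lie subalgebra and is nonzero in every case of interest (whenever $\mathfrak{h}$ is not already a Lie algebra). Likewise, replacing $\tau$ by $\tau+\varphi$ changes $f$ by $(\delta\varphi)(x,y)=[\varphi(x),\tau y]-\varphi([x,y])$, which contains only one action term (the other, $[\tau x,\varphi(y)]$, vanishes) and is again not alternating. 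So neither the cocycle condition nor the coboundary condition ``collapses to the ordinary Chevalley--Eilenberg one'': what you actually need is the vanishing of the second \emph{Leibniz} cohomology $HL^2(\mathfrak{s},V)$ with coefficients in the antisymmetric bimodule $V$, and the Whitehead lemmas do not transfer to that theory for free. A concrete warning sign: for the antisymmetric adjoint bimodule one has $HL^1(\mathfrak{s},\mathfrak{s})\neq 0$, since $\varphi=\mathrm{id}$ satisfies the cocycle condition $\varphi([x,y])=[\varphi(x),y]$ but is not of the form $v\mapsto[v,x]$ for fixed $v$ (compare traces), even though $H^1_{\mathrm{CE}}(\mathfrak{s},\mathfrak{s})=0$. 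The vanishing of $HL^2$ with antisymmetric coefficients over a semisimple Lie algebra in characteristic zero is in fact true, so your reduction is salvageable, but it requires its own argument (e.g.\ Pirashvili's comparison of Leibniz and Chevalley--Eilenberg cohomology, or Barnes' short inductive, module-theoretic proof, which avoids cohomology altogether); as written, the appeal to Whitehead's second lemma is applied to a complex in which it has not been established.
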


For a semisimple Leibniz algebra $\mathfrak{L}$, the semisimple Lie subalgebra $\mathfrak{g}$ is the liezation $\mathfrak{g}_L$ of $\mathfrak{L}$. Since $\Leib(\mathfrak{L})$ is a $\mathfrak{g}_L$-module, by Weyl's semisimplicity $\Leib(\mathfrak{L})$ decomposes into a direct sum $\oplus_{k=1}^n I_k$ of simple $\mathfrak{g}_L$-submodules. Moreover, $\mathfrak{g}_L$ is a direct sum $\oplus_{i=1}^m\mathfrak{g}_i$ of simple Lie subalgebras and we have the following. 

\begin{corollary}\label{levi_semisimple} Let $\mathfrak{L}$ be a semisimple Leibniz algebra. Then 
	\begin{equation*}
		\mathfrak{L} = \mathfrak{g}_L \ltimes \Leib(\mathfrak{L})=(\oplus_{i=1}^m\mathfrak{g}_i)\ltimes (\oplus_{k=1}^n I_k).
	\end{equation*}
\end{corollary}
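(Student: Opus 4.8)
The plan is to derive the statement by combining Barnes' Levi theorem with the definition of semisimplicity and the standard decomposition results for semisimple Lie algebras and their representations. First I would invoke Barnes' theorem to write $\mathfrak{L}=\mathfrak{g}\ltimes\rad(\mathfrak{L})$ with $\mathfrak{g}$ a semisimple Lie subalgebra. By the semisimplicity hypothesis $\rad(\mathfrak{L})=\Leib(\mathfrak{L})$, so the decomposition already reads $\mathfrak{L}=\mathfrak{g}\ltimes\Leib(\mathfrak{L})$, and it remains to identify $\mathfrak{g}$ with the liezation and to split each factor.

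The first genuine step is to identify the Levi complement $\mathfrak{g}$ with $\mathfrak{g}_L=\mathfrak{L}/\Leib(\mathfrak{L})$. Consider the canonical projection $\pi:\mathfrak{L}\to\mathfrak{g}_L$. Since the semidirect sum $\mathfrak{g}\oplus\Leib(\mathfrak{L})=\mathfrak{L}$ is direct as vector spaces, $\pi$ restricts to a linear isomorphism $\mathfrak{g}\to\mathfrak{g}_L$; as $\pi$ is an algebra homomorphism and $\mathfrak{g}$ a subalgebra, this restriction is a Lie-algebra isomorphism, so I may identify $\mathfrak{g}$ with $\mathfrak{g}_L$.

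Next I would decompose the two factors. The liezation $\mathfrak{g}_L$ is semisimple, hence by the structure theorem for semisimple Lie algebras it splits as $\oplus_{i=1}^m\mathfrak{g}_i$ into simple ideals. For the radical factor I would exhibit $\Leib(\mathfrak{L})$ as a $\mathfrak{g}_L$-module. The crucial point is that the module structure comes from the \emph{right} multiplications: since $\Leib(\mathfrak{L})$ is an ideal, the operators $R_g:z\mapsto[z,g]$ with $g\in\mathfrak{g}$ preserve it, and the Leibniz identity yields $R_{[g,h]}=-[R_g,R_h]$, so $g\mapsto -R_g$ is a representation of $\mathfrak{g}\cong\mathfrak{g}_L$ on $\Leib(\mathfrak{L})$. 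The left multiplications play no role, because setting $y=z$ in the Leibniz identity gives $[x,[y,y]]=0$ and hence $[\mathfrak{L},\Leib(\mathfrak{L})]=\{0\}$. As $\mathfrak{g}_L$ is semisimple, Weyl's theorem on complete reducibility gives $\Leib(\mathfrak{L})=\oplus_{k=1}^n I_k$ as a direct sum of simple $\mathfrak{g}_L$-submodules. Substituting both decompositions into $\mathfrak{L}=\mathfrak{g}_L\ltimes\Leib(\mathfrak{L})$ produces the claimed form.

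The main obstacle I anticipate is not the assembly but the careful handling of the one-sidedness of the Leibniz bracket: one must verify that it is the right action that endows $\Leib(\mathfrak{L})$ with a genuine $\mathfrak{g}_L$-module structure (the left action being trivial by $[\mathfrak{L},\Leib(\mathfrak{L})]=\{0\}$), so that Weyl's theorem is applicable. Once the identification $\mathfrak{g}\cong\mathfrak{g}_L$ and this module structure are pinned down, everything else is a direct appeal to the cited decomposition results.
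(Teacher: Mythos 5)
Your proposal is correct and follows essentially the same route the paper takes: Barnes' theorem plus the definition of semisimplicity gives $\mathfrak{L}=\mathfrak{g}\ltimes\Leib(\mathfrak{L})$, the Levi factor is identified with the liezation, and the two summands are split using the structure theorem for semisimple Lie algebras and Weyl's complete reducibility. The extra care you take in verifying that $g\mapsto -R_g$ defines the $\mathfrak{g}_L$-module structure on $\Leib(\mathfrak{L})$ (with the left action vanishing) is exactly what the paper leaves implicit in the paragraph preceding the corollary.
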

By \cite[Theorem 3.5, 3.8]{bipartite} the structure of an indecomposable semisimple Leibniz algebra $\mathfrak{L}=(\oplus_{i=1}^m \mathfrak{g}_i) \ltimes (\oplus_{k=1}^n I_k)$ can be determined by a connected bipartite graph with bipartition $(\{ \mathfrak{g}_1,\dots, \mathfrak{g}_m\}, \{ I_1, \dots, I_n \})$ and edges between $\mathfrak{g}_i$s and $I_j$s if $[I_j, \mathfrak{g}_i]=I_j$.

In the next theorem we describe all ideals of a semisimple Leibniz algebra. Finding the ideals of indecomposable semisimple Leibniz algebras suffice.  Given  a graph  $G=(V,E)$ and  a subset $A$ of $V$ we denote by $N(A)$ the set of all vertices in $G$ that are adjacent to at least one vertex in $A$.

\begin{theorem}
	The ideals of an indecomposable semisimple Leibniz algebra $\mathfrak{L}=(\oplus_{i=1}^m \mathfrak{g}_i) \ltimes (\oplus_{k=1}^n I_k)$ are the following:
	\begin{enumerate}
		\item[(i)] Abelian ideals $\oplus_{k\in A}I_k$, for any subset $A\subseteq \{1,\dots, n\}$;
		\item[(ii)] Non-solvable ideals $(\oplus_{i\in B} \mathfrak{g}_i)\ltimes (\oplus_{j\in N(B)} I_j) \oplus (\oplus_{k\in C} I_k)$, where $B\subseteq \{1,\dots, m\}$ and $C\subseteq \{1,\dots, n\}\setminus N(B)$.
	\end{enumerate}
\end{theorem}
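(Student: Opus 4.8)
The plan is to analyze an arbitrary ideal $J$ of $\mathfrak{L}=(\oplus_{i=1}^m\mathfrak{g}_i)\ltimes(\oplus_{k=1}^n I_k)$ through its interaction with the canonical decomposition $\mathfrak{L}=\mathfrak{g}_L\oplus\Leib(\mathfrak{L})$. First I would record the multiplication rules that the semidirect structure and the identity $[\mathfrak{L},\Leib(\mathfrak{L})]=0$ (which follows by setting $y=z$ in the Leibniz identity) impose: $[\mathfrak{g}_i,\mathfrak{g}_j]=\delta_{ij}\mathfrak{g}_i$, $\ [\mathfrak{g}_i,\Leib(\mathfrak{L})]=0$, $\ [\Leib(\mathfrak{L}),\Leib(\mathfrak{L})]=0$, and $[I_k,\mathfrak{g}_i]=I_k$ exactly when $\mathfrak{g}_i$ is adjacent to $I_k$ in the bipartite graph, and $0$ otherwise. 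A first consequence is that every $\mathfrak{g}_L$-submodule $W\subseteq\Leib(\mathfrak{L})$ is automatically a two-sided ideal of $\mathfrak{L}$, since $[\mathfrak{L},W]=0$ and $[W,\mathfrak{L}]=[W,\mathfrak{g}_L]\subseteq W$; in particular every $\oplus_{k\in A}I_k$ is an abelian ideal, which gives one inclusion of part (i).

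For the converse and for part (ii), take any ideal $J$ and let $\pi\colon\mathfrak{L}\to\mathfrak{L}/\Leib(\mathfrak{L})=\mathfrak{g}_L$ be the liezation homomorphism. Then $\pi(J)$ is an ideal of the semisimple Lie algebra $\mathfrak{g}_L$, hence $\pi(J)=\oplus_{i\in B}\mathfrak{g}_i$ for a unique $B\subseteq\{1,\dots,m\}$. The first key step is to upgrade this to $\oplus_{i\in B}\mathfrak{g}_i\subseteq J$: for $i\in B$ choose $x\in J$ whose image $\pi(x)$ is a nonzero element of $\mathfrak{g}_i$; decomposing $x$ along the Levi splitting and using $[\mathfrak{g}_i,\Leib(\mathfrak{L})]=0$ yields $[\mathfrak{g}_i,x]=[\mathfrak{g}_i,\pi(x)]\subseteq J\cap\mathfrak{g}_i$. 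Since $J\cap\mathfrak{g}_i$ is an ideal of the simple Lie algebra $\mathfrak{g}_i$ and is nonzero (as $\pi(x)\notin Z(\mathfrak{g}_i)=\{0\}$), it must equal $\mathfrak{g}_i$. It follows that $J=(\oplus_{i\in B}\mathfrak{g}_i)\oplus(J\cap\Leib(\mathfrak{L}))$, which reduces everything to identifying the submodule $W:=J\cap\Leib(\mathfrak{L})$.

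The second key step is that adjacency forces inclusions: if $i\in B$ and $\mathfrak{g}_i$ is adjacent to $I_j$, then $I_j=[I_j,\mathfrak{g}_i]\subseteq[\mathfrak{L},J]\subseteq J$, so $\oplus_{j\in N(B)}I_j\subseteq W$. Writing the complementary part of $W$ as a submodule of $\oplus_{k\notin N(B)}I_k$ and decomposing it as $\oplus_{k\in C}I_k$ with $C\subseteq\{1,\dots,n\}\setminus N(B)$ then produces precisely the shape asserted in (ii). I would close by verifying the converse direction, namely that each set of the stated form really is an ideal, by checking closure under $[\mathfrak{L},-]$ and $[-,\mathfrak{L}]$ against the multiplication rules; the only point requiring attention is that for $k\in C$ the product $[I_k,\mathfrak{g}_i]$ is either $0$ or again lands in $I_k\subseteq J$, even when $i\notin B$.

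The main obstacle I anticipate is the passage from ``$W$ is a $\mathfrak{g}_L$-submodule of $\Leib(\mathfrak{L})$'' to ``$W=\oplus_{k\in S}I_k$ for some index set $S$''. This is automatic only when the simple constituents $I_k$ are pairwise non-isomorphic; if isotypic components of multiplicity greater than one occur, genuine diagonal submodules exist and the clean subset-sum description can fail. I would therefore invoke the structure theory of \cite{bipartite} to ensure the decomposition $\oplus_k I_k$ can be taken so that the relevant submodule lattice is exactly the Boolean lattice of subsets, and handle this reduction with care, since it is the one place where the combinatorial statement depends on the module theory rather than on the bracket relations alone.
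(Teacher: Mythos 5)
Your proposal follows essentially the same route as the paper's proof: regard an ideal as a $\mathfrak{g}_L$-submodule, apply Weyl's theorem, and use adjacency in the bipartite graph to force $\oplus_{j\in N(B)}I_j$ into the ideal. Two of your refinements are genuine improvements over the printed argument. The paper infers from complete reducibility that a non-abelian ideal $X$ ``is a direct sum of some $\mathfrak{g}_i$'s and $I_k$'s,'' which is not automatic as a module-theoretic statement; your detour through the liezation projection $\pi$ --- identifying $\pi(J)=\oplus_{i\in B}\mathfrak{g}_i$, then showing $J\cap\mathfrak{g}_i$ is a nonzero ideal of the simple Lie algebra $\mathfrak{g}_i$, hence all of it --- lands the semisimple part inside $J$ without that assumption and cleanly reduces the problem to $W=J\cap\Leib(\mathfrak{L})$. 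You also verify the converse inclusion (that each listed subspace really is an ideal), which the paper only asserts.

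The caveat you raise at the end is a real one, and it afflicts the paper's own proof and statement, not just your write-up. If two summands $I_k$, $I_{k'}$ are isomorphic as $\mathfrak{g}_L$-modules with the same neighbourhood in the bipartite graph --- for instance $\mathfrak{sl}_2\ltimes(V\oplus V)$ with $V$ the natural module acting on the right on both copies, which is an indecomposable semisimple Leibniz algebra --- then a diagonal copy of $V$ satisfies $[\mathfrak{L},\Delta]=0$ and $[\Delta,\mathfrak{L}]=[\Delta,\mathfrak{g}_L]\subseteq\Delta$, so it is an abelian ideal not of the form $\oplus_{k\in A}I_k$ for the fixed decomposition. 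However, the repair you propose --- invoking \cite{bipartite} to choose the decomposition so that the submodule lattice of $\Leib(\mathfrak{L})$ is Boolean --- is not available: no decomposition of an isotypic component of multiplicity at least two has a Boolean submodule lattice, and nothing in the cited structure theory excludes repeated isomorphism types. The honest fixes are either to add the hypothesis that the $I_k$ are pairwise non-isomorphic as $\mathfrak{g}_L$-modules, or to restate item (i) and the $C$-part of item (ii) with ``an arbitrary $\mathfrak{g}_L$-submodule of $\oplus_{k\notin N(B)}I_k$'' in place of the subset-sum. With either adjustment your argument closes completely; as written, the final reduction cannot be carried out in the generality claimed.
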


\begin{proof} Let $X$ be an ideal of $\mathfrak{L}$. Since $\mathfrak{L}$ is a $\oplus_{i=1}^m \mathfrak{g}_i$-module we obtain that $X$ is a $\oplus_{i=1}^m \mathfrak{g}_i$-submodule of $\mathfrak{L}$.
	
	Assume $X\subseteq \oplus_{k=1}^n I_k$. By Weyl's semisimplicity $X$ is a Lie algebra module is a direct sum of simple $\oplus_{i=1}^m \mathfrak{g}_i$-submodules. Since $I_k$ for all $1\leq k \leq n$ are simple $\oplus_{i=1}^m \mathfrak{g}_i$-modules, $X$ is direct sum of some $I_k$'s and we obtain (i). 
	
	Now assume that $X\not \subseteq \oplus_{k=1}^n I_k$. By the similar argument we obtain that $X$ as a Lie algebra module is a direct sum of some $\mathfrak{g}_i$'s and $I_k$'s. Let $B$ be a subset of $\{1,\dots, m\}$ such that $\mathfrak{g}_i\in X$ only for  $i\in B$. For any $I_k\in N(\mathfrak{g}_i)$ by the construction of semisimple Leibniz algebra we have $I_k=[I_k,\mathfrak{g}_i]\subseteq [I_k,X]\subseteq X$ and therefore, $\oplus_{k\in N(B)} I_k\subseteq X$. Since $[I_k, \mathfrak{g}_i]=0$ whenever $I_k \notin N(\mathfrak{g}_i)$, one can add direct summands $I_k$'s to the ideal $X$  such that $k\in \{1, \dots, n\} \setminus N(B)$. 	
\end{proof}

Obviously, ideals of a semisimple Leibniz algebra are all possible summands of the direct sum of the ideals of each indecomposable subideals of the Leibniz algebra. 

\subsection{Leibniz $n$-algebras} 
A Leibniz $n$-algebra \cite{CLP} is defined as a $\mathbb{K}$-module $L$ equipped with a linear $n$-ary operation $[-,\ldots,-]: L^{\otimes n}\longrightarrow L$ satisfying the identity: 
\[[[x_1,\ldots,x_n],y_1,\ldots,y_{n-1}]=\sum^{n}_{i=1}[x_1,\ldots,x_{i-1},[x_i,y_1,\ldots,y_{n-1}],x_{i+1},\ldots,x_n].\]
Note that for $n=2$ this is the definition of Leibniz algebra. Moreover, if the bracket $[-,\dots,-]$ factors through $\Lambda^nL$, then $L$ is an  $n$-Lie algebra introduced in \cite{Fil}.

Since  $n$-ary multiplication of Leibniz $n$-algebras is not necessarily skew-symmetric, in \cite{cartan} some variation of the definition of ideal is given. 

\begin{definition} A subspace $I$ of a Leibniz $n$-algebra $L$ is called an $s$-ideal of $L$, if $$[\underbrace{L,\dots,L}_{s-1},I,L,\dots,L]\subseteq I.$$ If $I$ is an $s$-ideal for all $1\leq s \leq n$, then $I$ is called an ideal.
\end{definition}

Consider the $n$-sided ideal
$$\Leib_n(L) =  \text{ideal} \langle
[x_1,\dots,x_i, \dots , x_j, \dots , x_n]   \mid  \exists i, j: x_i=x_j , \ x_1,\dots,x_n\in L\rangle ,$$
which is called the \textit{Leibniz n-kernel} of $L$. The quotient $n$-algebra $L/\Leib_n(L)$ is clearly an $n$-Lie algebra. 

\begin{definition} A linear map $d$ defined on a Leibniz $n$-algebra $L$ is called a derivation  if
	\[
	d([x_1,x_2,\dots , x_n])=\sum_{i=1}^n \ [x_1,\dots d(x_i),\dots , x_n].
	\] 
\end{definition}

The space of all derivations of a given Leibniz $n$-algebra $L$ is denoted by $\Der(L)$ and forms a Lie algebra with respect to the commutator \cite{cartan}.  
Given an arbitrary element $x=(x_2,\dots,x_n) \in L^{\otimes(n-1)}$ consider the operator $R[x]: L \to L$ of right multiplication defined for all $z\in L$ by $$R[x](z)=[z,x_2,\dots,x_n].$$

As in $n$-Lie algebras, a right multiplication operator is a derivation and the space $R[L]$ of all right multiplication operators forms a Lie algebra ideal of $\Der(L)$ \cite{cartan}. Given a Leibniz $n$-algebra $L$, one can associate the Lie algebras $R[L]$ or $\Der(L)$ to $L$. The following statement from \cite[Propisition 3.2]{CLP} shows how a Leibniz $n$-algebra can be constructed from a given Leibniz algebra. 

\begin{proposition}	 Let $\mathfrak{L}$ be a Leibniz algebra with the product $[ - , - ]$. Then the vector space $\mathfrak{L}$ can be equipped with the Leibniz $n$-algebra structure with the following product:
	$$ [x_1, x_2, \dots, x_n]:= [x_1,[\dots,[x_{n-2}, [x_{n-1},x_n]]\dots]].$$
\end{proposition}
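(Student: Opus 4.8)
The plan is to reduce the Leibniz $n$-identity to the classical fact that, in any Leibniz algebra, right multiplication is a derivation of the binary bracket. The crucial observation is that the $n$-ary right multiplication collapses to a binary one. Indeed, unwinding the definition of the product, for every $z \in \mathfrak{L}$ we have
\[
[z, y_1, \dots, y_{n-1}] = [z, [y_1, [\dots, [y_{n-2}, y_{n-1}]\dots]]] = [z, w],
\]
where $w := [y_1, [\dots, [y_{n-2}, y_{n-1}]\dots]]$ depends only on $y_1, \dots, y_{n-1}$. Thus the $n$-ary right multiplication operator $R[y]$ is nothing but the binary right multiplication $R_w \colon z \mapsto [z, w]$. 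Setting $D := R_w$, and noting that the $i$-th summand on the right-hand side of the identity is $[x_1, \dots, x_{i-1}, [x_i, y_1, \dots, y_{n-1}], x_{i+1}, \dots, x_n] = [x_1, \dots, x_{i-1}, D(x_i), x_{i+1}, \dots, x_n]$, the identity to be established reads
\[
D([x_1, \dots, x_n]) = \sum_{i=1}^n [x_1, \dots, x_{i-1}, D(x_i), x_{i+1}, \dots, x_n].
\]
In other words, it suffices to prove that $D$ is a derivation of the $n$-ary bracket.

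Next I would record the base fact that $D = R_w$ is a derivation of the binary bracket; this is merely a rearrangement of the Leibniz identity, since from $[[x,y],w] = [[x,w],y] + [x,[y,w]]$ one reads off $R_w([x,y]) = [R_w(x), y] + [x, R_w(y)]$. Then I would show, by induction on $n$, that every derivation $D$ of the binary bracket is automatically a derivation of the iterated $n$-ary product. The case $n=2$ is exactly this base fact. For the inductive step, write $[x_1, \dots, x_n] = [x_1, m]$ with $m := [x_2, \dots, x_n]$ the $(n-1)$-ary bracket, and apply the binary derivation property followed by the induction hypothesis applied to $m$:
\[
D([x_1, m]) = [D(x_1), m] + [x_1, D(m)] = [D(x_1), x_2, \dots, x_n] + \sum_{i=2}^n [x_1, x_2, \dots, D(x_i), \dots, x_n].
\]
The two terms on the right combine into $\sum_{i=1}^n [x_1, \dots, D(x_i), \dots, x_n]$, which is the required derivation property for the $n$-ary bracket. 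Taking $D = R_w$ then yields precisely the Leibniz $n$-identity.

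The argument is essentially bookkeeping, and the only genuine content lies in the opening reduction: recognizing that the $n$-ary right multiplication $R[y]$ is a single binary right multiplication $R_w$, which converts the $n$-ary identity into the statement that $R_w$ is a derivation. Once this is in hand there is no real obstacle, as the derivation property propagates through the nesting by the one-line induction above. The only point requiring care is the indexing convention in the nested bracket, so that the inner factor $m = [x_2, \dots, x_n]$ is correctly identified as the $(n-1)$-ary product to which the induction hypothesis applies, and so that the collapse $[z, y_1, \dots, y_{n-1}] = [z, w]$ is matched termwise against the right-hand side of the identity.
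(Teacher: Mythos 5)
Your proof is correct. Note that the paper itself gives no proof of this proposition --- it is quoted from Casas--Loday--Pirashvili \cite{CLP} --- but your argument (collapsing the $n$-ary right multiplication $R[y]$ to the binary right multiplication $R_w$, observing that $R_w$ is a derivation of the binary bracket by the Leibniz identity, and propagating the derivation property through the nested product by induction) is exactly the standard route and matches how the result is established in the cited source.
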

It follows that the authors of \cite{CLP} has built a ``forgetful" functor $\U_n: \Lb \to {}_{n}\Lb$ from the category of Leibniz algebras to the category of Leibniz $n$-algebras. The restriction of $\U_n$ on $\Lie$ does not necessarily fall into the category of $n$-Lie algebras ${}_{n}\Lie$. Conversely, given an $n$-Lie algebra $L$, there is a construction by  Daletskii and Takhtajan  \cite{DaTa} of a Leibniz algebra on $L^{\otimes (n-1)}$ with the bracket
\[
[l_1\otimes \dots \otimes l_{n-1}, l_1'\otimes \dots \otimes l_{n-1}']= \sum_{1\leq i \leq n-1} l_1\otimes \dots \otimes [l_i,l_1',\dots, l'_{n-1}]\otimes \dots \otimes l_{n-1}.
\]
This Leibniz algebra is called the \textit{basic Leibniz algebra} of an $n$-Lie algebra $L$ in \cite{DaTa} and comparisons with other Lie algebra constructions from a given $n$-Lie algebra are studied in \cite{n-Lie}. It is straightforward that Daletskii-Takhtajan's construction  gives rise to a functor $\mathcal{D}_{n-1}: {}_{n}\Lb \to \Lb$. Remarkably, $\U_n$  is not a right adjoint of $\mathcal{D}_{n-1}$. Indeed, for abelian Leibniz algebra $\mathfrak{L}$ and abelian $n$-algebra $L$ one can check that 
\begin{align*}
	\Hom_{\Lb}(\mathcal{D}_{n-1}(L), \mathfrak{L})&=\Hom_{\mathbb{K}}(L^{\otimes n-1}, \mathfrak{L}),\\
	\Hom_{{}_n\Lb}(L,\U_n(\mathfrak{L}))&= \Hom_{\mathbb{K}}(L,\mathfrak{L}),
\end{align*}	
and they are of different dimensions.

Note that we can define a Leibniz $n$-algebra as an $n$-algebra where every right multiplication operator is a derivation. For Lie and $n$-Lie algebras, an operator of right multiplication is singular. For Leibniz algebras (that is $n=2$) it is also true \cite{OmirovAlb}. However, for some Leibniz $n$-algebra ($n\geq 3$) an invertible operator of right multiplication exists, as shown in  \cite[Example 2.3]{cartan}. 

\begin{example} An $n$-ary algebra over a field $\mathbb{K}$ with the basis $\{e_1,\dots, e_m\}$ with the following multiplication
	$$[e_i,e_1,\dots , e_{n-1}]=\alpha_i e_i, \ \ \alpha\in \mathbb{K}$$
	where $\alpha_i \neq 0$ for all $1 \leq i \leq m, \ 
	\sum_{i=1}^{n-1}\alpha_i=0$ and all other products are zero is a Leibniz $n$-algebra. In this $n$-algebra the operator $R[e]$ is invertible, where $e=(e_1,\dots, e_{n-1})$.
\end{example}

Let $H$ be an ideal of a Leibniz $n$-algebra $L$. Put
$H^{(1)_k}=H$ and
\[
H^{(m+1)_k}
=\sum_{i_1+\dots+i_k=0}^{n-k}
[\underbrace{L,\dots,L}_{i_1},H^{(m)_k},\underbrace{L,\dots,L}_{i_2},H^{(m)_k},\dots,
\underbrace{L,\dots,L}_{i_k},H^{(m)_k},L,\dots,L]
\]
for all $1\leq k \leq n$ and $m\geq 1$.

\begin{definition}\cite{Frattini} An ideal $H$ of a Leibniz $n$-algebra $L$ is said to be $k$-solvable with index of $k$-solvability equal to $m$ if there exists $ m\in \mathbb{N}$ such that $H^{(m)_k}=0$ and $H^{(m-1)_k}\neq 0$. An ideal $H$ is called solvable if it is $n$-solvable. When $L = H$, $L$ is called  a $k$-solvable (correspondingly, solvable) Leibniz $n$-algebra.
\end{definition}
Notice that this definition agrees with the definition of
$k$-solvability of $n$-Lie algebras given in \cite{Kasymov}. Next statement is a straightforward generalization of the similar statement from \cite{Ling}.

\begin{proposition}\label{k<k' solvable} 
	A $k$-solvable ideal $H$ of a Leibniz $n$-algebra is $k'$-solvable for all $k<k'$. An ideal is called solvable if it is $n$-solvable. 
\end{proposition}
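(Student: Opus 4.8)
The plan is to reduce the statement to a single family of nested inclusions for the $k$-solvability series: namely, to show that for every $m\geq 1$ one has $H^{(m)_{k'}}\subseteq H^{(m)_k}$ whenever $k<k'$. Granting this, if $H$ is $k$-solvable then $H^{(M)_k}=0$ for some $M\in\mathbb{N}$, whence $H^{(M)_{k'}}\subseteq H^{(M)_k}=0$, so that $H$ is $k'$-solvable. Thus the whole proposition follows from this one chain of inclusions, and I would spend the proof establishing it.

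I would prove the inclusions by induction on $m$. The base case $m=1$ is immediate, since $H^{(1)_{k'}}=H=H^{(1)_k}$. For the inductive step, assume $H^{(m)_{k'}}\subseteq H^{(m)_k}$. By definition, $H^{(m+1)_{k'}}$ is spanned by $n$-ary brackets carrying exactly $k'$ entries from $H^{(m)_{k'}}$ and $n-k'$ entries from $L$, summed over all placements of the $H^{(m)_{k'}}$-entries among the $n$ slots. Applying the inductive hypothesis, each such bracket has its $k'$ distinguished entries lying in $H^{(m)_k}$.

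The key observation is that every term of the series satisfies $H^{(j)_k}\subseteq L$: indeed $H^{(1)_k}=H\subseteq L$, and the $n$-ary bracket maps into $L$, so $H^{(j)_k}\subseteq L$ for all $j$. Consequently, in any bracket appearing in $H^{(m+1)_{k'}}$ I may single out any $k$ of its $k'$ entries from $H^{(m)_k}$ as the distinguished $H^{(m)_k}$-entries and reclassify the remaining $k'-k$ such entries as ordinary $L$-entries. Together with the original $n-k'$ copies of $L$ this produces a bracket with exactly $k$ entries from $H^{(m)_k}$ and $n-k$ entries from $L$ in some positions, which is precisely one of the summands defining $H^{(m+1)_k}$. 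Hence $H^{(m+1)_{k'}}\subseteq H^{(m+1)_k}$, completing the induction.

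The only delicate point, and the step I would check most carefully, is the bookkeeping in this demotion: one must verify that after reclassifying $k'-k$ of the $H^{(m)_k}$-entries as $L$-entries, the resulting arrangement genuinely matches one of the index patterns $(i_1,\dots,i_k)$ with $i_1+\dots+i_k\leq n-k$ occurring in the defining sum for $H^{(m+1)_k}$, rather than falling outside the prescribed form. Since that sum ranges over all distributions of the $n-k$ copies of $L$ into the gaps around the $k$ copies of $H^{(m)_k}$, every placement is accounted for and no arrangement is missed; this is exactly what legitimizes the absorption. I expect the argument to be otherwise routine once this combinatorial matching is made explicit.
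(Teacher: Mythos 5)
Your proof is correct. The paper does not actually supply an argument for this proposition---it only remarks that it is a straightforward generalization of the corresponding statement of Ling for $n$-Lie algebras---and the chain of inclusions $H^{(m)_{k'}}\subseteq H^{(m)_k}$ that you establish by induction (demoting $k'-k$ of the $H^{(m)_k}$-entries to $L$-entries, which is legitimate because every term of the series lies in $L$ and the defining sum for $H^{(m+1)_k}$ ranges over all placements of the $k$ distinguished slots) is exactly the standard argument the paper is alluding to.
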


\begin{definition} 
	The maximal $k$-solvable ideal in Leibniz $n$-algebra $L$ is called the $k$-solvable radical of $L$, denoted $\Rad_k(L)$. $\Rad_k(L)$ is called the radical of $L$ if $k=n$ and is denoted $\Rad(L)$.
\end{definition}
In \cite[Theorem 4.5]{Frattini} the invariance of a $k$-radical under a derivation of a Leibniz $n$-algebra is proven. 

\begin{theorem} Let $D$ be a derivation of a Leibniz $n$-algebra $L$. Then $D(\Rad_k(L))\subseteq \Rad_k(L)$.
\end{theorem}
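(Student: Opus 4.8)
\emph{Proof proposal.} The plan is to show stability of the $k$-solvable radical under $D$ by the classical ``invariant sum'' argument: form the subspace $J=\Rad_k(L)+D(\Rad_k(L))$, prove that $J$ is again a $k$-solvable ideal, and then invoke maximality of $\Rad_k(L)$ to force $J=\Rad_k(L)$, which is precisely the asserted inclusion $D(\Rad_k(L))\subseteq \Rad_k(L)$. For maximality to be applicable I would first record, exactly as for Lie and $n$-Lie algebras, that the sum of two $k$-solvable ideals is $k$-solvable, so that $\Rad_k(L)$ really is the unique maximal $k$-solvable ideal.

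The argument then splits into two lemmas about an arbitrary $k$-solvable ideal $I$. First, I would show that $J=I+D(I)$ is an ideal. This is a direct computation with the derivation identity: for $a\in I$ and any slot $s$, the rule for $D([\ell_1,\dots,a,\dots,\ell_n])$ (with $a$ in slot $s$) expresses the term $[\ell_1,\dots,D(a),\dots,\ell_n]$ as $D$ of a bracket already lying in $I$, minus a sum of brackets each still carrying the factor $a\in I$ in slot $s$. All of these summands lie in $I+D(I)=J$, and since the same works in every slot, $J$ is an $s$-ideal for all $s$, hence an ideal.

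Second, and this is where the real work lies, I would prove that $J=I+D(I)$ is $k$-solvable. The natural mechanism is a containment of the $k$-derived series of $J$ inside the $D$-saturation of the $k$-derived series of $I$: I would try to establish, by induction on $m$, an inclusion of the shape $J^{(m)_k}\subseteq \sum_{j\geq 0}D^{j}\big(I^{(m)_k}\big)$. The base case is immediate, and the inductive step amounts to taking a bracket all of whose $k$ distinguished entries have the form $D^{a}(h)$ with $h\in I^{(m)_k}$ and pushing each derivation out of its individual slot so that it acts on the whole bracket. The tool is again the derivation identity, used as an ``integration by parts'': applying $D$ to a bracket of total derivative order $A-1$ returns the desired order-$A$ bracket plus correction terms, some landing in strictly lower order and some merely redistributing the derivatives among the distinguished slots. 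Since $I^{(M)_k}=0$ for some $M$, the target $\sum_{j}D^{j}(I^{(m)_k})$ collapses to $0$ at level $M$, yielding $k$-solvability of $J$.

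I expect the main obstacle to be controlling precisely those correction terms in which a derivative is moved from one distinguished slot to another without lowering the total order; these resist a naive single-parameter induction, and making the bookkeeping rigorous will require a carefully chosen induction, for instance on total derivative order together with an auxiliary measure of how the derivatives are distributed among the slots, or else a separate proof that each term $I^{(m)_k}$ is itself an ideal so that the higher powers of $D$ can be reabsorbed. A conceptually cleaner alternative I would keep in reserve is to pass to the abstract Jordan decomposition $D=S+N$ into commuting semisimple and nilpotent derivations: for the nilpotent part $\exp(N)$ is an automorphism, automorphisms visibly preserve $\Rad_k(L)$, and differentiating gives $N(\Rad_k(L))\subseteq \Rad_k(L)$, while the semisimple part is treated by decomposing $L$ into $S$-eigenspaces and showing $\Rad_k(L)$ is homogeneous. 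This route trades the combinatorial induction for the facts that the Jordan components of a derivation are again derivations and that automorphisms preserve the radical.
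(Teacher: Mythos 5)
First, a remark on the comparison itself: the paper does not prove this statement --- it is quoted from \cite[Theorem 4.5]{Frattini} --- so there is no in-paper argument to measure you against. Your overall strategy (show that $J=\Rad_k(L)+D(\Rad_k(L))$ is a $k$-solvable ideal and invoke maximality, granted that sums of $k$-solvable ideals are $k$-solvable) is the standard one, and your verification that $J$ is an ideal is correct: pulling $D$ out of the distinguished slot writes $[\ell_1,\dots,D(a),\dots,\ell_n]$ as $D([\ell_1,\dots,a,\dots,\ell_n])$ minus correction terms that keep $a\in I$ in its slot, and everything lands in $I+D(I)$.

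The gap is in the $k$-solvability of $J$, which is the entire content of the theorem, and the specific mechanism you propose for it is false. The inclusion $J^{(m)_k}\subseteq\sum_{j\geq 0}D^{j}\bigl(I^{(m)_k}\bigr)$ fails already for $n=k=2$: take $L$ the Heisenberg Lie algebra with basis $x,y,z$ and $[x,y]=z=-[y,x]$, the abelian ideal $I=\Span\{y,z\}$, and the derivation $D$ with $D(x)=D(z)=0$, $D(y)=x$. Then $I^{(2)_2}=[I,I]=0$, so your right-hand side is $0$, while $J=I+D(I)=L$ and $J^{(2)_2}=[L,L]=\Span\{z\}\neq 0$. (Here $J$ is still solvable, so the lemma survives, but your induction cannot prove it; the obstruction is exactly the cross terms $[I,D(I)]$, which lie only in $I$ and not in $D(I^{(2)_k})$, and redistributing derivatives among slots does not help because the identity $[a,Db]+[Da,b]=D([a,b])$ controls only the sum of the two cross terms, never each one separately.) You anticipate this difficulty yourself, but anticipating it is not resolving it: a genuinely different bookkeeping, for instance an inclusion with a loss of one step such as $J^{(m)_k}\subseteq I^{(m-1)_k}+D\bigl(I^{(m)_k}\bigr)$, proved with the help of $D\bigl(I^{(m)_k}\bigr)\subseteq I^{(m-1)_k}$ and the fact that each $I^{(m)_k}$ is an ideal of $L$, has to be stated and verified. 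Your reserve route via the Jordan decomposition $D=S+N$ is legitimate, and the nilpotent half (automorphisms $\exp(tN)$ preserve $\Rad_k(L)$, then a Vandermonde argument extracts $N$) is complete; but it relocates rather than removes the difficulty. That $S$ and $N$ are again derivations of the $n$-ary bracket, and above all that $\Rad_k(L)$ is homogeneous for the $S$-eigenspace grading, are precisely the remaining work: homogeneity is usually obtained by noting that for every character $\chi$ of the (torsion-free) grading group the scaling map $x_\lambda\mapsto\chi(\lambda)x_\lambda$ is an automorphism, hence preserves $\Rad_k(L)$, and then separating the finitely many eigenvalues --- all of this, together with descent from $\overline{\mathbb{K}}$ to $\mathbb{K}$, must be written out before the proposal counts as a proof.
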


\section{Leibniz $n$-kernel of a Leibniz $n$-algebra}\label{kernel}
Consider a Leibniz $n$-algebra $L$ and let us introduce for all $1\leq i <j \leq n$ the following linear subspaces of $\Leib_n(L)$: 
$$I_{ij}=\Span \{[x_1,\dots,x_i,\dots,x_j,\dots, x_n] \mid x_i=x_j, \ x_1,\dots, x_n \in L \}.$$
Note that for $n=2$, $I_{12}$ is equal to the Leibniz kernel. In the general case, $\Leib_n(L)$ is generated by $ \sum_{1\leq i <j \leq n} I_{ij}$ and we have the following statement.

\begin{proposition}\label{I_ij_is_1-ideal}
	The linear space $I_{ij}$ is a 1-ideal of the Leibniz $n$-algebra $L$ for all $1\leq i < j \leq n$.
\end{proposition}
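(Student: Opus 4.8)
The plan is to verify the defining inclusion $[I_{ij},L,\dots,L]\subseteq I_{ij}$ directly, by expanding the right multiplication of a generator through the Leibniz $n$-algebra identity. Since $I_{ij}$ is spanned by brackets $z=[x_1,\dots,x_n]$ with $x_i=x_j$, and since the map $z\mapsto[z,y_1,\dots,y_{n-1}]$ is linear, it suffices to show $[z,y_1,\dots,y_{n-1}]\in I_{ij}$ for every such generator $z$ and arbitrary $y_1,\dots,y_{n-1}\in L$; the general case then follows because $I_{ij}$ is a subspace.

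First I would apply the fundamental identity to obtain
\[
[[x_1,\dots,x_n],y_1,\dots,y_{n-1}]=\sum_{k=1}^{n}[x_1,\dots,x_{k-1},[x_k,y_1,\dots,y_{n-1}],x_{k+1},\dots,x_n],
\]
and then split this sum into the $n-2$ terms with $k\neq i,j$ and the two terms with $k\in\{i,j\}$. For each $k\neq i,j$ the $i$-th and $j$-th slots are untouched and still carry the common value $x_i=x_j$, so each such term is by definition a generator of $I_{ij}$.

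The heart of the argument is the pair $k=i$ and $k=j$. Writing $u:=x_i=x_j$ and $w:=[x_i,y_1,\dots,y_{n-1}]=[x_j,y_1,\dots,y_{n-1}]$ (these agree precisely because $x_i=x_j$), and letting $A(p,q)$ denote the bracket with $p$ in slot $i$, $q$ in slot $j$, and the remaining slots filled by the original $x_\ell$, these two terms are exactly $A(w,u)+A(u,w)$. Neither summand lies in $I_{ij}$ on its own, but a polarization identity rescues them: expanding $A(u+w,u+w)$ by multilinearity in slots $i$ and $j$ gives $A(u+w,u+w)=A(u,u)+A(u,w)+A(w,u)+A(w,w)$, and since $A(u,u)$, $A(w,w)$, and $A(u+w,u+w)$ each have equal $i$-th and $j$-th entries, all three belong to $I_{ij}$; hence $A(w,u)+A(u,w)=A(u+w,u+w)-A(u,u)-A(w,w)\in I_{ij}$.

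Combining the two parts shows the entire sum lies in $I_{ij}$, which completes the verification. I expect the only genuine obstacle to be recognizing that the $k=i$ and $k=j$ contributions cannot be handled separately and must be grouped; the polarization trick is the one nonroutine ingredient, whereas every other step is an immediate consequence of multilinearity and the definition of $I_{ij}$.
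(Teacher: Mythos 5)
Your proof is correct and follows essentially the same route as the paper: both apply the fundamental identity to a generator with $x_i=x_j$, observe that the terms with $k\neq i,j$ are themselves generators of $I_{ij}$, and handle the remaining pair $k=i$, $k=j$ by the same polarization identity (the paper records it first as its relation (\ref{condition2}) and then invokes it, while you derive it at the point of use). No gap to report.
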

\begin{proof}
	From  
	\begin{align*}	
		[x_1,\dots,x_i + x_j,\dots,x_i+ x_j,\dots, x_n]&-[x_1,\dots,x_i,\dots,x_i,\dots, x_n]-[x_1,\dots,x_j,\dots,x_j,\dots, x_n]\\
		&= [x_1,\dots,x_i,\dots,x_j,\dots, x_n]  + [x_1,\dots,x_j,\dots,x_i,\dots, x_n]
	\end{align*} one obtains
	\begin{equation}\label{condition2}
	[x_1,\dots,x_i,\dots,x_j,\dots, x_n] + [x_1,\dots,x_j,\dots,x_i,\dots, x_n]\in I_{ij}.
	\end{equation}
	Consider the Leibniz $n$-identity 
	\begin{multline*}
		[[x_1,\dots,x_i,\dots,x_j,\dots, x_n],y_2,\dots, y_n]=  \sum_{k\neq i, j} [x_1,\dots, [x_k,y_2,\dots, y_n], \dots, x_n]\\
		 + [x_1,\dots, [x_i,y_2,\dots, y_n],\dots, x_j,\dots, x_n] +[x_1,\dots, x_i, \dots, [x_j,y_2,\dots, y_n],\dots, x_n].
	\end{multline*}
	If $x_i=x_j$, then the last two terms of the RHS by the property (\ref{condition2})	belong to $I_{ij}$ and every summand in the sum is in $I_{ij}$. Hence, $I_{ij}$ is a $1-$ideal of $L$ for all $1\leq i <j\leq n$. 
\end{proof}

\subsection{Results on $L=\U_3(\mathfrak{L})$ }
Since for Leibniz algebras $[a,[b,b]]=0$, which over the base field of characteristic not equal to 2 is equivalent to $[a,[b,c]]=-[a,[c,b]]$, we obtain 
\begin{equation}\label{condition}
[x,y,y]=0 \text{ and } [x,y,z]=-[x,z,y].
\end{equation}		
Hence, the ternary bracket in this case is a linear map $\mathfrak{L}\otimes (\mathfrak{L}\wedge \mathfrak{L})\to \mathfrak{L}$. 

An ideal $I$ of a Leibniz 3-algebra $L$ in the language of the underlying Leibniz algebra $\mathfrak{L}$ is defined by the following inclusions:
$$[I,[\mathfrak{L},\mathfrak{L}]]\subseteq I, \ [\mathfrak{L},[I,\mathfrak{L}]]\subseteq I, \ [\mathfrak{L},[\mathfrak{L},I]]\subseteq I. $$
Note that, since $il+li\in \Leib(\mathfrak{L})$ for any $i\in I, \ l\ \in \mathfrak{L}$ and $[\mathfrak{L},\Leib(\mathfrak{L})]=0$ we have  $[\mathfrak{L},[I,\mathfrak{L}]]=[\mathfrak{L},[\mathfrak{L},I]]$. Therefore, we have the following statement. 

\begin{proposition} 
	A subspace $V$ of  a Leibniz 3-algebra $\U_3(\mathfrak{L})$ is an ideal if and only if \[
	[V,[\mathfrak{L},\mathfrak{L}]]\subseteq V \textrm{ and } [\mathfrak{L},[V,\mathfrak{L}]]\subseteq V.
	\]
\end{proposition}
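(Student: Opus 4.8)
The plan is to read off the defining inclusions of an ideal in the case $n=3$, rewrite each one in terms of the underlying Leibniz bracket, and then observe that one of the three is redundant.

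First I would recall that, by definition, a subspace $V\subseteq L=\U_3(\mathfrak{L})$ is an ideal exactly when it is simultaneously a $1$-ideal, a $2$-ideal and a $3$-ideal, that is
$$[V,L,L]\subseteq V, \qquad [L,V,L]\subseteq V, \qquad [L,L,V]\subseteq V.$$
Substituting the ternary product $[x,y,z]=[x,[y,z]]$ of $\U_3(\mathfrak{L})$ into each slot, these three conditions become, respectively,
$$[V,[\mathfrak{L},\mathfrak{L}]]\subseteq V, \qquad [\mathfrak{L},[V,\mathfrak{L}]]\subseteq V, \qquad [\mathfrak{L},[\mathfrak{L},V]]\subseteq V,$$
which are precisely the three inclusions displayed just before the proposition.

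Next I would invoke the identity $[\mathfrak{L},[V,\mathfrak{L}]]=[\mathfrak{L},[\mathfrak{L},V]]$ established in the preceding remark: since $[v,\ell]+[\ell,v]\in\Leib(\mathfrak{L})$ for all $v\in V$, $\ell\in\mathfrak{L}$, and $[\mathfrak{L},\Leib(\mathfrak{L})]=0$, one gets $[\mathfrak{L},[v,\ell]]=-[\mathfrak{L},[\ell,v]]$, so the two spans coincide as subspaces. Hence the second and third bracket inclusions are the very same condition. The criterion then follows in both directions. If $V$ is an ideal it satisfies all three inclusions, and in particular the first two. Conversely, if the first two hold, then the third is forced by $[\mathfrak{L},[\mathfrak{L},V]]=[\mathfrak{L},[V,\mathfrak{L}]]\subseteq V$, so $V$ is simultaneously a $1$-, $2$- and $3$-ideal and therefore an ideal.

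I do not expect a genuine obstacle here: the whole argument rests on the collapse $[\mathfrak{L},[V,\mathfrak{L}]]=[\mathfrak{L},[\mathfrak{L},V]]$, which is already available from the behavior of the Leibniz kernel. The only points requiring a little care are tracking the correct slot for $V$ when translating each $s$-ideal condition into a bracket inclusion, and noting that it is the equality of subspaces---not merely a signed relation---that allows the $2$-ideal and $3$-ideal conditions to be identified.
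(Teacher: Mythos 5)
Your argument is correct and is exactly the paper's: the paper gives no separate proof, but derives the proposition from the immediately preceding remark that the three $s$-ideal conditions translate to $[V,[\mathfrak{L},\mathfrak{L}]]\subseteq V$, $[\mathfrak{L},[V,\mathfrak{L}]]\subseteq V$, $[\mathfrak{L},[\mathfrak{L},V]]\subseteq V$, and that the last two coincide because $[v,\ell]+[\ell,v]\in\Leib(\mathfrak{L})$ and $[\mathfrak{L},\Leib(\mathfrak{L})]=0$. No gaps.
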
 
\begin{proposition} The subspace $I_{12}$ of $\U_3(\mathfrak{L})$ is an ideal and is equal to $\Leib_3(L)$.
	
\end{proposition}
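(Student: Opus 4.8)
The plan is to reduce the whole statement to showing that the single subspace $I_{12}$ is already an ideal, and then promote its known $1$-ideal property to a full ideal property. First I would record that among the three subspaces $I_{12},I_{13},I_{23}$ only $I_{12}$ matters. Indeed, the identity $[x,y,y]=0$ from (\ref{condition}) gives $I_{23}=\{0\}$ immediately, while the skew-symmetry $[x,y,z]=-[x,z,y]$ yields $[x,y,x]=-[x,x,y]$, so that $I_{13}=I_{12}$. Since $\Leib_3(L)$ is the ideal generated by $\sum_{1\leq i<j\leq 3}I_{ij}$, this collapses to $\Leib_3(L)=\text{ideal}\langle I_{12}\rangle$. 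Consequently it suffices to prove that $I_{12}$ is itself an ideal, for then $\text{ideal}\langle I_{12}\rangle=I_{12}$ and both assertions of the proposition follow at once.

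To verify that $I_{12}$ is an ideal I would invoke the characterization established just above: a subspace $V$ of $\U_3(\mathfrak{L})$ is an ideal precisely when $[V,[\mathfrak{L},\mathfrak{L}]]\subseteq V$ and $[\mathfrak{L},[V,\mathfrak{L}]]\subseteq V$. The first inclusion is exactly the assertion that $I_{12}$ is a $1$-ideal, which is already guaranteed by Proposition \ref{I_ij_is_1-ideal}. Thus the only outstanding point is the second inclusion, namely the $2$-ideal condition $[\mathfrak{L},[I_{12},\mathfrak{L}]]\subseteq I_{12}$, equivalently $[l,i,m]\in I_{12}$ for all $l,m\in\mathfrak{L}$ and $i\in I_{12}$.

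For this last step the key tool is relation (\ref{condition2}), which for $n=3$ and $(i,j)=(1,2)$ reads $[a,b,c]+[b,a,c]\in I_{12}$ for all $a,b,c$. Setting $a=l$, $b=i$, $c=m$ with $i\in I_{12}$, this says $[l,i,m]\equiv -[i,l,m]\pmod{I_{12}}$; but $[i,l,m]\in I_{12}$ since $I_{12}$ is a $1$-ideal, whence $[l,i,m]\in I_{12}$, which is precisely the $2$-ideal condition. Combined with the $1$-ideal property and the characterization quoted above, this shows $I_{12}$ is an ideal, and therefore $\Leib_3(L)=I_{12}$. I do not anticipate a genuine obstacle: the argument rests entirely on the skew-symmetry of the last two slots (forcing $I_{13}=I_{12}$ and $I_{23}=0$) and on relation (\ref{condition2}), which transports an $I_{12}$-element out of the middle slot and into the first slot — where the $1$-ideal property applies — at the cost of an error term already lying in $I_{12}$. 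The only thing demanding care is bookkeeping of which positions are being swapped, so that (\ref{condition2}) is applied with the correct index pair.
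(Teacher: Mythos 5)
Your proof is correct and follows essentially the same route as the paper: collapse $I_{23}=\{0\}$ and $I_{13}=I_{12}$ via (\ref{condition}), invoke the $1$-ideal property from Proposition \ref{I_ij_is_1-ideal}, and use the congruence (\ref{condition2}) for the transposition of the first two slots to move an $I_{12}$-element into the first position. The paper states the stronger claim that the bracket is skew-symmetric modulo $I_{12}$ under all of $S_3$, but your explicit reduction to the $2$-ideal condition via the preceding characterization of ideals of $\U_3(\mathfrak{L})$ is just a cleaner packaging of the same argument.
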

\begin{proof} Using equalities (\ref{condition}) we have $I_{23}=\{0\}$ and $I_{13}=I_{12}$. Therefore, the ideal $\Leib_3(L)$ is generated by $I_{12}$. By Proposition \ref{I_ij_is_1-ideal} $I_{12}$ is a 1-ideal. Moreover, one can establish 
	\begin{equation*}
		[x_1,x_2,x_3]-(-1)^{sgn(\pi)}[x_{\pi(1)},x_{\pi(2)}, x_{\pi(3)}] \in I_{12}
	\end{equation*}
	for all permutations $\pi \in S_3$, which implies that $I_{12}$ is an ideal and thus, is equal to $\Leib_3(L)$. 
\end{proof}

Since a Leibniz $3$-algebra is a $3$-Lie algebra if and only if $\Leib_3(L)=\{0\}$, we have immediately the following statement. 

\begin{corollary} $\U_3(\mathfrak{L})$ is a $3$-Lie algebra if and only if the Leibniz algebra $\mathfrak{L}$ satisfies $[x,[x,y]]=0$ for all $x,y\in \mathfrak{L}$.
\end{corollary}

\subsection{Right multiplication operator of $\U_n(\mathfrak{L})$ and the Leibniz $n$-kernel}  
Recall, that for a Lie and $n$-Lie algebras, all adjoint (right multiplication) maps are singular due to skew-symmetry. For Leibniz algebras, all right multiplications are also singular \cite{OmirovAlb}. However, for Leibniz $n$-algebras, as shown in \cite[Lemma 2.3]{cartan} right multiplication operators might be invertible. In this subsection we give a short proof of this fact generalizing the proof of \cite[Proposition 5]{Gor} and prove that the converse does not hold.

\begin{proposition} \label{non_degenrate_then_I=L} Let $L$ be a finite-dimensional Leibniz $n$-algebra over a field of characteristic zero. If it admits an invertible operator of right multiplication then $\Leib_n(L)=L$. 
\end{proposition}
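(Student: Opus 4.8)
The plan is to transfer the problem to the quotient $n$-Lie algebra $\bar L=L/\Leib_n(L)$ and invoke the fact, recalled in the preliminaries, that a right multiplication operator of an $n$-Lie algebra is always singular. Throughout, write $R=R[x]$ for the given invertible right multiplication, with $x=(x_2,\dots,x_n)\in L^{\otimes(n-1)}$.

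First I would check that $\Leib_n(L)$ is invariant under $R$. Since $\Leib_n(L)$ is an ideal, in particular a $1$-ideal, for every $w\in\Leib_n(L)$ we have $R(w)=[w,x_2,\dots,x_n]\in\Leib_n(L)$, so $R(\Leib_n(L))\subseteq\Leib_n(L)$. As $R$ is an invertible operator of the finite-dimensional space $L$, injectivity forces $R(\Leib_n(L))=\Leib_n(L)$; hence $R$ descends to a well-defined operator $\bar R$ on $\bar L$, and surjectivity of $R$ together with finite-dimensionality of $\bar L$ makes $\bar R$ bijective as well.

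The second step is to recognize $\bar R$ as a right multiplication of the $n$-Lie algebra $\bar L$. The quotient bracket satisfies $\overline{[z,x_2,\dots,x_n]}=[\bar z,\bar x_2,\dots,\bar x_n]$, so $\bar R=R[(\bar x_2,\dots,\bar x_n)]$. But in an $n$-Lie algebra every such operator is singular whenever the algebra is nonzero: if some $\bar x_i=0$ the operator vanishes identically, and otherwise skew-symmetry gives $\bar R(\bar x_2)=[\bar x_2,\bar x_2,\dots,\bar x_n]=0$, producing a nonzero element of $\ker\bar R$. Since $\bar R$ is invertible, this is only possible when $\bar L=0$, that is, $\Leib_n(L)=L$.

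The proof is short, and I expect the only delicate points to be the two reductions of the first step—that $\Leib_n(L)$ is $R$-stable and that invertibility genuinely survives on the quotient—together with the bookkeeping that the induced map is honestly a right multiplication of $\bar L$ rather than merely an abstract derivation. The part that would be the real obstacle in a direct approach, namely exhibiting a kernel vector for the supposedly invertible operator, is dissolved for free once we pass to $\bar L$, since the singularity of right multiplications of $n$-Lie algebras supplies such a vector automatically; this is why moving to the quotient $n$-Lie algebra is the decisive move.
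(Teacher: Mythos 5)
Your proposal is correct and follows essentially the same route as the paper: both pass to the quotient $n$-Lie algebra $L/\Leib_n(L)$, observe that the invertible right multiplication descends to an invertible right multiplication there, and derive a contradiction from the singularity of right multiplications in a nonzero $n$-Lie algebra. Your version merely spells out the $R$-invariance of $\Leib_n(L)$ and the kernel vector $\bar x_2$ explicitly, and avoids the paper's (unnecessary) separate treatment of the case $\Leib_n(L)=\{0\}$.
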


\begin{proof}  Let there be an invertible right multiplication operator $R[a]:L\to L$, where $a=(a_2,\dots, a_n)$. Assume that $\Leib_n(L) \neq L$. If $\Leib_n(L)=\{0\}$, then $L$ is an $n$-Lie algebra which contradicts to invertibility of $R[a]$. Hence, $\Leib_n(L)$ is a non-trivial ideal of $L$ and note that it is an invariant subspace of $R[a]$. The right multiplication map $R[a]$ induces an invertible right multiplication operator in the quotient $n$-Lie algebra  $L/\Leib_n(L)$, which is a contradiction. Therefore, $\Leib_n(L)=L$.
\end{proof}

The converse statement is false, and an example of a Leibniz $n$-algebra with $\Leib_n(L)=L$ and all right multiplication maps being singular is constructed in \cite[Example 2.4]{cartan}, which is a $\U_n$ of a simple Leibniz algebra with $\mathfrak{sl}_2$ as its liezation. Obviously, any right multiplication operator for any Leibniz $n$-algebra in $\U_n(\Lb)$ is singular due to a right multiplication being singular in Leibniz algebras (\cite{OmirovAlb}, \cite{Gor}). Below we prove that any Leibniz $n$-algebra $\U_n(\mathfrak{L})$ for a semisimple Leibniz algebra $L$ serves as an example for the converse of Proposition \ref{non_degenrate_then_I=L} to be false. 

\begin{theorem}\label{Leib_n(L)=L}
	For any semisimple Leibniz algebra $\mathfrak{L}$, the Leibniz $n$-kernel of $\U_n(\mathfrak{L})$ coincides with $\U_n(\mathfrak{L})$.
\end{theorem}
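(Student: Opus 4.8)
The plan is to show that the single summand $I_{12}=\Span\{[x_1,x_1,x_3,\dots,x_n]\mid x_1,x_3,\dots,x_n\in\mathfrak{L}\}$ already exhausts $\U_n(\mathfrak{L})$; since $I_{12}\subseteq\Leib_n(\U_n(\mathfrak{L}))\subseteq\U_n(\mathfrak{L})$, this gives the claim at once. First I would simplify the generating set. Writing the $n$-bracket out, $[x_1,x_1,x_3,\dots,x_n]=[x_1,[x_1,w]]$ with $w=[x_3,[\dots,[x_{n-1},x_n]\dots]]$, and since semisimple Leibniz algebras are perfect, repeated bracketing does not shrink $\mathfrak{L}$, so the nested inner bracket $w$ ranges over a spanning set of $\mathfrak{L}$. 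Because $w\mapsto[x,[x,w]]$ is linear, this yields $I_{12}=\Span\{[x,[x,w]]\mid x,w\in\mathfrak{L}\}$.

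Next I would feed in the structure from Corollary \ref{levi_semisimple}, writing $\mathfrak{L}=\mathfrak{g}_L\oplus\Leib(\mathfrak{L})$ and recording the bracket rules valid in a semisimple Leibniz algebra: $[\mathfrak{g}_L,\mathfrak{g}_L]=\mathfrak{g}_L$, while $[\mathfrak{L},\Leib(\mathfrak{L})]=0$ forces both $[\mathfrak{g}_L,\Leib(\mathfrak{L})]=0$ and $[\Leib(\mathfrak{L}),\Leib(\mathfrak{L})]=0$. A useful preliminary observation is that expanding a square $[x,x]$ with $x=g+v$ collapses to $[v,g]$, whence $\Leib(\mathfrak{L})=[\Leib(\mathfrak{L}),\mathfrak{g}_L]$; in particular the $\mathfrak{g}_L$-action on $\Leib(\mathfrak{L})$ is surjective. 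Now I would expand $[x,[x,w]]$ for $x=g+v$ and $w=h+u$; every term in which $\Leib(\mathfrak{L})$ occupies a right slot vanishes, leaving the clean splitting $[x,[x,w]]=[g,[g,h]]+[v,[g,h]]$ into a $\mathfrak{g}_L$-part and a $\Leib(\mathfrak{L})$-part. Varying $v$ alone shows that $I_{12}$ contains $[v,[g,h]]$ for all $v\in\Leib(\mathfrak{L})$ and $g,h\in\mathfrak{g}_L$, whose span is $[\Leib(\mathfrak{L}),[\mathfrak{g}_L,\mathfrak{g}_L]]=[\Leib(\mathfrak{L}),\mathfrak{g}_L]=\Leib(\mathfrak{L})$; hence $\Leib(\mathfrak{L})\subseteq I_{12}$.

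It then remains to capture the Lie part, and this is where I expect the real work. Setting $v=0$ shows $I_{12}$ contains every $[g,[g,h]]$ with $g,h\in\mathfrak{g}_L$, so I must prove the lemma that $\Span\{[g,[g,h]]\mid g,h\in\mathfrak{g}_L\}=\mathfrak{g}_L$ for a semisimple Lie algebra $\mathfrak{g}_L$. My plan for this is to let $W$ denote that span, use polarization ($g\to g+g'$) to see that $W$ contains all $[g,[g',h]]+[g',[g,h]]$, and then a short Jacobi computation to show that $[g'',[g,[g,h]]]\in W$, so that $W$ is an $\mathrm{ad}$-invariant subspace, i.e.\ an ideal of $\mathfrak{g}_L$. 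Since $\mathfrak{g}_L$ is semisimple, $W$ is a sum of simple factors, and the contribution of each simple factor is a nonzero ideal of it (nonzero because $\mathrm{ad}_h^2\neq 0$ for a nonzero semisimple $h$), hence equals the whole factor; thus $W=\mathfrak{g}_L$. Combining the two parts gives $I_{12}\supseteq\mathfrak{g}_L+\Leib(\mathfrak{L})=\mathfrak{L}$, and therefore $\Leib_n(\U_n(\mathfrak{L}))=\U_n(\mathfrak{L})$. The only genuinely delicate point is this final Lie-algebra lemma; everything else is bookkeeping with the bracket relations.
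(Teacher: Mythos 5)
Your argument is correct, but it follows a genuinely different route from the paper's. The paper works with the full ideal $\Leib_n(L)$ and exploits its ideal/module structure twice: it first produces a nonzero element of each simple factor $\mathfrak{g}_i$ via an $\mathfrak{sl}_2$-triple (from $[h,e]=2e$ one gets $2^{n-1}e=[h,h,\dots,h,e]\in\Leib_n(L)$), concludes $\mathfrak{g}_i\subseteq\Leib_n(L)$ because $\Leib_n(L)$ is a $\mathfrak{g}_L$-submodule and $\mathfrak{g}_i$ is simple, and then absorbs $\Leib(\mathfrak{L})=[\Leib(\mathfrak{L}),\mathfrak{g}_L,\dots,\mathfrak{g}_L]\subseteq[\Leib(\mathfrak{L}),\Leib_n(L),\dots,\Leib_n(L)]\subseteq\Leib_n(L)$. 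You instead prove the sharper statement that the single spanning subspace $I_{12}=\Span\{[x,[x,w]]\}$ already equals $\mathfrak{L}$, with no recourse to the ideal closure: the reduction to $[x,[x,w]]$ via perfectness, the splitting $[x,[x,w]]=[g,[g,h]]+[v,[g,h]]$ using $[\mathfrak{L},\Leib(\mathfrak{L})]=0$, and the recovery of $\Leib(\mathfrak{L})$ from $[\Leib(\mathfrak{L}),\mathfrak{g}_L]=\Leib(\mathfrak{L})$ are all sound, and your key Lie-theoretic lemma that $W=\Span\{(\operatorname{ad}g)^2h\}$ is a nonzero ad-invariant subspace of each simple factor (hence all of $\mathfrak{g}_L$) checks out — the polarization identity together with $\operatorname{ad}_{g''}\operatorname{ad}_g^2=\operatorname{ad}_g^2\operatorname{ad}_{g''}+\operatorname{ad}_g\operatorname{ad}_{[g'',g]}+\operatorname{ad}_{[g'',g]}\operatorname{ad}_g$ does show $W$ is an ideal, and nonvanishing of $(\operatorname{ad}h)^2$ for a nonzero semisimple element (or, equivalently, nondegeneracy of the Killing form) gives $W\cap\mathfrak{g}_i\neq 0$. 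The trade-off: the paper's proof is shorter and leans on the ideal property of $\Leib_n(L)$, while yours costs more bookkeeping but yields more — it identifies $\Leib_n(\U_n(\mathfrak{L}))$ with the linear span of the degenerate brackets themselves, and it avoids invoking an $\mathfrak{sl}_2$-subalgebra of each simple factor, which makes it somewhat more robust over non-algebraically-closed fields of characteristic zero.
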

\begin{proof} Denote by $L=\U_n(\mathfrak{L})$ and let us apply the decomposition of semisimple Leibniz algebra from Corollary \ref{levi_semisimple}. From Lie theory it is well-known that every simple Lie algebra contains a subalgebra isomorphic to $\mathfrak{sl}_2$. From $[h,e]=2e$ we obtain $2^{n-1}e=[h,h,\dots,h,e]$ which belongs to $\Leib_n(L)$. Hence, $\Leib_n(L)$ contains some elements from each simple Lie algebra $\mathfrak{g}_i$ in the decomposition of $\mathfrak{g}_L=\oplus_{i=1}^m \mathfrak{g}_i$. Since $\Leib_n(L)$ is a $\mathfrak{g}_L$-module, in fact $\Leib_n(L)$ contains every $\mathfrak{g}_i$ for all $1\leq i \leq n$ due to simplicity of the Lie subalgebras.
	
	Next, from $[\Leib(\mathfrak{L}), \mathfrak{g}_L]=\Leib(\mathfrak{L})$ and since $\mathfrak{g}_L$ is perfect, we have 
	\[
	\Leib(\mathfrak{L})=[\Leib(\mathfrak{L}),\mathfrak{g}_L,\dots, \mathfrak{g}_L]\subseteq [\Leib(\mathfrak{L}), \Leib_n(L), \dots,  \Leib_n(L)]\subseteq  \Leib_n(L).
	\]
	Thus $\Leib_n(L)\supseteq \mathfrak{g}_L\oplus \Leib(\mathfrak{L})=L$ which completes the proof. 
\end{proof}

\section{Structure of Leibniz $n$-algebra $\U_n(\mathfrak{L})$ }\label{main}
It is straightforward that a subalgebra $\mathfrak{A}$ of a Leibniz algebra $\mathfrak{L}$ constitutes a Leibniz $n$-subalgebra $\U_n(\mathfrak{A})$ of $\U_n(\mathfrak{L})$. Moreover, we have the following statement.
\begin{proposition}\label{directsum_CLP}  $\U_n(\mathfrak{L}_1\oplus \mathfrak{L}_2)=\U_n(\mathfrak{L}_1)\oplus \U_n(\mathfrak{L}_2)$ for any Leibniz algebras $\mathfrak{L}_1$ and $\mathfrak{L}_2$.
\end{proposition}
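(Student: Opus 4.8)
The plan is to observe that both sides of the claimed equality carry the same underlying vector space $\mathfrak{L}_1\oplus\mathfrak{L}_2$, so the entire content of the statement is that their two $n$-ary brackets coincide as multilinear maps $(\mathfrak{L}_1\oplus\mathfrak{L}_2)^{\otimes n}\to\mathfrak{L}_1\oplus\mathfrak{L}_2$. First I would recall the componentwise form of the binary bracket on a direct sum of Leibniz algebras: since $\mathfrak{L}_1$ and $\mathfrak{L}_2$ are ideals of $\mathfrak{L}_1\oplus\mathfrak{L}_2$ with no cross interaction, one has
\[
[(u,v),(u',v')]=\bigl([u,u']_{\mathfrak{L}_1},\,[v,v']_{\mathfrak{L}_2}\bigr).
\]

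Next I would compute the $n$-ary bracket of $\U_n(\mathfrak{L}_1\oplus\mathfrak{L}_2)$ on elements $(u_k,v_k)$ by unwinding the iterated binary bracket from the innermost slot outward. Starting from $[(u_{n-1},v_{n-1}),(u_n,v_n)]=([u_{n-1},u_n]_{\mathfrak{L}_1},[v_{n-1},v_n]_{\mathfrak{L}_2})$ and applying the componentwise binary identity repeatedly, a short induction on the nesting depth shows that the property of being componentwise is preserved at each stage, so that
\[
[(u_1,v_1),\dots,(u_n,v_n)]=\bigl([u_1,\dots,u_n]_{\U_n(\mathfrak{L}_1)},\,[v_1,\dots,v_n]_{\U_n(\mathfrak{L}_2)}\bigr).
\]

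On the other side, the $n$-ary bracket of the direct sum $\U_n(\mathfrak{L}_1)\oplus\U_n(\mathfrak{L}_2)$ is, by the definition of a direct sum of Leibniz $n$-algebras, componentwise as well: every bracket whose arguments are drawn from both summands must vanish, and indeed by multilinearity inserting a zero in any slot annihilates the corresponding component, so no mixed term survives. Comparing this with the displayed expression above shows that the two $n$-ary maps are literally identical, which yields the equality of the two Leibniz $n$-algebras.

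The computation is routine, and the only point demanding care is to fix once and for all the convention for the direct sum of Leibniz $n$-algebras and to verify that the iterated bracket produced by $\U_n$ genuinely respects the decomposition, i.e.\ that no cross term leaks out of the nested expansion. This bookkeeping is where I would concentrate attention, but I do not anticipate any real obstacle beyond it.
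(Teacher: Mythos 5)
Your proposal is correct and follows essentially the same route as the paper: the paper's proof is exactly the computation of the nested bracket on pairs $(x_i,y_i)$, peeling off one layer at a time via the componentwise binary bracket until the $n$-ary product is seen to act componentwise. Your phrasing of this unwinding as an induction on nesting depth, and your explicit remark that the direct-sum bracket kills mixed terms, are just slightly more verbose versions of the same argument.
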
	
\begin{proof} 	Let $(x_1 , y_1), (x_2 , y_2), \dots , (x_n , y_n) \in \mathfrak{L}_1 \oplus \mathfrak{L}_2$. Using the product in $\U_n(\mathfrak{L}_1\oplus \mathfrak{L}_2)$ we have
	\begin{align*}
		[(x_1 , y_1), (x_2 , y_2), \dots, (x_n , y_n)]&=[(x_1 , y_1),[\dots,[(x_{n-2} , y_{n-2}) , [(x_{n-1} , y_{n-1}),(x_n , y_n)]]\dots]]\\
		=& [(x_1 , y_1),[\dots,[ (x_{n-2} , y_{n-2}) , \left([x_{n-1}, x_n] , [y_{n-1}, y_n]\right) ]\dots]]\\
		= & [(x_1 , y_1),[\dots,[ (x_{n-3} , y_{n-3}) , \left([x_{n-2},[x_{n-1}, x_n]] , [y_{n-2},[y_{n-1}, y_n]]\right) ]\dots]]\\
		=& \left([x_1 [\dots,[x_{n-2} , [x_{n-1}, x_n]]\dots]] , [y_1 [\dots,[y_{n-2} , [y_{n-1}, y_n]]\dots]] \right)\\
		= & ([x_1 , x_2 ,\dots, x_n] , [y_1 , y_2 ,\dots, y_n])
	\end{align*}	
	which implies the result. 
\end{proof}	

Next, we introduce of the notion of simplicity as in \cite{Fil}. Note the difference with Definition \ref{simpleLeibniz} for Leibniz algebras. 

\begin{definition}
	A non-abelian Leibniz $n$-algebra $(n\geq 3)$ is called simple if the only ideals are $\{0\}$ and the $n$-algebra itself. 
\end{definition}
Recall from \cite{Fil} an important example of an $(n+1)$-dimensional $n$-Lie algebra which is an analogue of the three-dimensional Lie algebra with the cross product as multiplication. 
\begin{example}
	Let $\mathbb{K}$ be a field and $V_n$ an $(n+1)$-dimensional $\mathbb{K}$-vector space with a basis $\{ e_1,\dots, e_{n+1}\}$. Then $V_n$, equipped with the skew-symmetric $n$-ary multiplication induced by
	\[
	[e_1,\dots, e_{i-1},e_{i+1},\dots, e_{n+1}]=(-1) ^{n+1+i}e_i,\quad 1\leq i \leq n+1,
	\]
	is an $n$-Lie algebra.
\end{example}
\noindent This $n$-Lie algebra is a simple $n$-Lie algebra.  Remarkably, as shown in~\cite{Ling}, 
over an algebraically closed field $\mathbb{K}$ all simple $n$-Lie algebras are isomorphic to $V_n$. 

\begin{proposition}\label{simple_nLie_not_U_n}
	The simple  $n$-Lie algebra $V_n$ is not in the category $\U_n(\Lb)$.
\end{proposition}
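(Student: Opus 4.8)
The plan is to argue by contradiction: suppose $V_n\cong\U_n(\mathfrak{L})$ for some Leibniz algebra $\mathfrak{L}$, fix such an isomorphism $\phi$, and exploit that $V_n$ is skew-symmetric (an $n$-Lie algebra) to pin down $\mathfrak{L}$ so tightly that it must be trivial. First I would show that $\mathfrak{L}$ is forced to be a genuine Lie algebra. Take $w\in\Leib(\mathfrak{L})$. For any $y_1,\dots,y_{n-1}$ the product $[y_1,\dots,y_{n-1},w]$ in $\U_n(\mathfrak{L})$ unwinds in $\mathfrak{L}$ to $[y_1,[\dots,[y_{n-1},w]\dots]]$, whose innermost bracket $[y_{n-1},w]$ vanishes because $[\mathfrak{L},\Leib(\mathfrak{L})]=0$; hence every such product is zero. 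Transporting along $\phi$ and using the skew-symmetry of $V_n$ to move $\phi(w)$ into an arbitrary slot, I conclude $\phi(w)$ lies in the center of $V_n$. Since $V_n$ is simple and non-abelian its center is the zero ideal, so $\Leib(\mathfrak{L})=0$ and $\mathfrak{L}$ is a Lie algebra, which I will denote $\mathfrak{g}$ (with $\dim\mathfrak{g}=n+1$).

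Next I would extract two incompatible facts about $\mathfrak{g}$. On the one hand, $V_n$ is simple, hence perfect as an $n$-algebra, so the span of all $n$-brackets is everything; but in $\U_n(\mathfrak{g})$ every $n$-bracket is an iterated Lie bracket lying in $[\mathfrak{g},\mathfrak{g}]$, which forces $[\mathfrak{g},\mathfrak{g}]=\mathfrak{g}$, i.e. $\mathfrak{g}$ is perfect. On the other hand, skew-symmetry of $V_n$ gives $[x,x,y_3,\dots,y_n]=0$, and unwinding this identity shows $\operatorname{ad}_x^2$ annihilates every iterated bracket $[y_3,[\dots,[y_{n-1},y_n]\dots]]$; as these span $\mathfrak{g}^{\,n-2}$, which equals $\mathfrak{g}$ once we know $\mathfrak{g}$ is perfect, we obtain $\operatorname{ad}_x^2=0$ for all $x\in\mathfrak{g}$.

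Finally, $\operatorname{ad}_x^2=0$ makes every $\operatorname{ad}_x$ nilpotent, so Engel's theorem forces $\mathfrak{g}$ to be nilpotent; a nonzero nilpotent Lie algebra is never perfect, so $\mathfrak{g}=0$, contradicting $\dim\mathfrak{g}=n+1\ge 4$. I expect the delicate point to be the second paragraph: getting $\operatorname{ad}_x^2=0$ on all of $\mathfrak{g}$ rather than merely on the iterated-bracket subspace $\mathfrak{g}^{\,n-2}$, which is exactly where perfectness must be fed back in (for $n=3$ this is automatic, since the inner factor is a single element). A secondary subtlety is the reduction to the Lie case: the robust fact is that $\Leib(\mathfrak{L})=0$ characterizes Lie algebras, so one genuinely needs the perfect-plus-nilpotent contradiction rather than concluding directly that $\mathfrak{L}$ is abelian.
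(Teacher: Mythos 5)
Your proof is correct, and it takes a genuinely different route from the paper's. The paper argues entirely in coordinates: writing $[e_i,e_j]=\sum_k\alpha_{ij}^k e_k$, it pairs the Leibniz identity for $[[e_i,e_j],E]$ against the explicit multiplication table of $V_n$ to force first $[e_i,e_i]=0$, then $[e_i,e_j]=\alpha_{ij}^i e_i$, and finally the contradiction $e_{n+1}=[e_1,\dots,e_n]\in\Span\{e_1\}$. Your argument is basis-free: $[\mathfrak{L},\Leib(\mathfrak{L})]=0$ puts $\Leib(\mathfrak{L})$ in the annihilator of the last slot, skew-symmetry spreads this to every slot, and simplicity kills the center, giving $\Leib(\mathfrak{L})=\{0\}$; then perfectness of $\mathfrak{g}$, the vanishing $\operatorname{ad}_x^2=0$ on $\mathfrak{g}^{n-2}=\mathfrak{g}$, and Engel's theorem produce the perfect-versus-nilpotent clash. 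The paper's computation is elementary and self-contained but leans on the specific structure constants of $V_n$; your argument uses only that $V_n$ is a non-abelian simple (hence perfect and centerless) $n$-Lie algebra, so it actually proves the stronger statement that no perfect $n$-Lie algebra with trivial center lies in $\U_n(\Lb)$, without invoking Ling's classification of simple $n$-Lie algebras. The two delicate points you flagged are handled correctly: perfectness is exactly what upgrades $\operatorname{ad}_x^2=0$ from the iterated-bracket subspace to all of $\mathfrak{g}$ (automatic only when $n=3$), and the final contradiction must indeed go through Engel rather than a direct claim that $\mathfrak{L}$ is abelian, since $\Leib(\mathfrak{L})=0$ only yields that $\mathfrak{L}$ is a Lie algebra.
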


\begin{proof} Assume that there exists a Leibniz algebra $\mathfrak{L}$, such that $V_n=\U_n(\mathfrak{L})$. Let $\{ e_1,\dots, e_{n+1}\}$ be a basis of $\mathfrak{L}$. Suppose 
	$[e_i,e_j]=\alpha_{ij} ^1 e_1$ + $\alpha_{ij} ^2 e_2$ + \dots + $\alpha_{ij} ^{n+1} e_{n+1}$ for some $\alpha_{ij}^k\in \mathbb{K}$.

	Let us denote by $E$ the right ordered product $[e_{k_1},[\dots,[e_{k-3},[e_{k_{n-2}},e_{k_{n-1}}]]\dots]]$ of basis elements such that $k_1,\dots,k_{n-1} \in \{1,\dots,n+1\}$ and $k_i \neq k_j$ for i $\neq$ j. Fix an integer $i\in \{1,\dots, n+1\}$ and let some $k_j=i$. On one hand, \[
	[[e_i,e_i],e_{k_1},e_{k_2},\dots,e_{k_{n-2}},e_{k_{n-1}}]=[[e_i,e_i],E]=[[e_i,E],e_i]+[e_i,[e_i,E]]=0,
	\] 
	since $[e_i,E]=[e_i,e_{k_1},e_{k_2},\dots,e_{k_{n-2}},e_{k_{n-1}}]=0$.	On the other hand, 	
	\begin{align*}
		[[e_i,e_i],e_{k_1},e_{k_2},& \dots,e_{k_{n-2}},e_{k_{n-1}}]= \sum_{r=1}^{n+1}\alpha_{ii}^r[e_r,e_{k_1},e_{k_2},\dots,e_{k_{n-2}},e_{k_{n-1}}]\\
		=& \alpha_{ii}^p[e_p,e_{k_1},e_{k_2},\dots,e_{k_{n-2}},e_{k_{n-1}}]+\alpha_{ii}^qr[e_q,e_{k_1},e_{k_2},\dots,e_{k_{n-2}},e_{k_{n-1}}]\\
		=& \pm \alpha_{ii}^p e_q\pm \alpha_{ii}^qe_p,
	\end{align*}
	where $\{p,q\}=\{1,\dots, n+1\}\setminus \{k_1,\dots,k_{n-1} \}$.  Hence, we obtain $\alpha_{ii} ^{p}=\alpha_{ii} ^{q}=0$ for all $\{p,q\}=\{1,\dots, n+1\}\setminus \{k_1,\dots,k_{n-1} \}$. Choosing all possible subsets of $n-2$ elements from the set $\{1,\dots, n+1\}\setminus \{i\}$ for $\{k_1,\dots, l_{n-1}\}\setminus \{i\}$, similarly we establish that $\alpha_{ii}^r=0$ for all $r\neq i$. From $0=[e_i,\dots,e_i]=[e_i[\dots,[e_i, [e_i,e_i]]\dots]]=(\alpha^i_{ii})^{n-1}e_i$
	we obtain $\alpha_{ii}^i=0$ and $[e_i,e_i]=0$.

	Now fix some $1\leq i\neq  j \leq n+1$ and choose different $k_1,\dots,k_{n-1}$ from $\{1,\dots,n+1\}$ so that $i$ and $j$ are also chosen. Similarly, using Leibniz identity and the $n$-Lie bracket one establishes $[[e_i,e_j],E]=0$. On the other hand, $\displaystyle [[e_i,e_j],E]=\sum_{r=1}^{n+1}\alpha^r_{ij}[e_r,E]$ and going through all possible options for $k_1,\dots,k_{n-1}$ yields $[e_i,e_j]=\alpha_{ij}^i e_i +\alpha_{ij}^j e_j$. From \[
	0=[e_i,\dots,e_i,e_j]=[e_i,[\dots,[e_i,[e_i,e_j]]\dots]]=\alpha_{ij}^i(\alpha_{ij}^j)^{n-2}e_i+(\alpha_{ij}^j)^{n-1}e_j
	\]
	we obtain $\alpha_{ij}^j=0$ and $[e_i,e_j]=\alpha_{ij}^ie_i$. This implies that $$e_{n+1}=[e_1,e_2,\dots,e_{n}]=[e_1,[\dots,[e_{n-2},[e_{n-1},e_n]]\dots]]\in \Span\{e_1\},$$ which is a contradiction. 
\end{proof}

\begin{proposition}\label{correspondence_ideals} Let $I$ be an ideal of a Leibniz algebra $\mathfrak{L}$. Then $I$ is an ideal of the Leibniz n-algebra $L=\U_n(\mathfrak{L})$. 
\end{proposition}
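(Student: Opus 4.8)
The plan is to verify directly that $I$ satisfies the defining inclusions of an ideal of the Leibniz $n$-algebra $L=\U_n(\mathfrak{L})$, that is, to check that $I$ is an $s$-ideal for every $1\leq s\leq n$. Since $I$ is a two-sided ideal of the underlying Leibniz algebra $\mathfrak{L}$, I may use both $[I,\mathfrak{L}]\subseteq I$ and $[\mathfrak{L},I]\subseteq I$; these are the only two facts the argument will require.

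The key structural observation is that the $n$-ary bracket of $\U_n(\mathfrak{L})$ is assembled from a chain of ordinary Leibniz brackets. Writing $u_n=x_n$ and $u_k=[x_k,u_{k+1}]$ for $k=n-1,\dots,1$, one has $[x_1,\dots,x_n]=u_1$. First I would fix the position $s$ at which the entry from $I$ is placed and locate the innermost bracket that involves it. If $x_s\in I$ with $s<n$, then $u_s=[x_s,u_{s+1}]$ lies in $[I,\mathfrak{L}]\subseteq I$; if $s=n$, then $u_{n-1}=[x_{n-1},x_n]$ lies in $[\mathfrak{L},I]\subseteq I$; and if $s=1$, then $u_1=[x_1,u_2]$ already lies in $[I,\mathfrak{L}]\subseteq I$.

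Next I would propagate the membership outward. Once some $u_j\in I$ with $j\geq 2$, each enclosing bracket $u_{j-1}=[x_{j-1},u_j],\ \dots,\ u_1=[x_1,u_2]$ lies in $[\mathfrak{L},I]\subseteq I$ by repeated application of the right-ideal property. Hence $u_1=[x_1,\dots,x_n]\in I$ whenever a single entry lies in $I$ and the remaining entries lie in $\mathfrak{L}$, regardless of the position $s$. This is precisely the assertion that $I$ is an $s$-ideal for every $s$, so $I$ is an ideal of $L$.

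I expect no serious obstacle here: the whole content lies in recognizing that the two-sidedness of $I$ in $\mathfrak{L}$ supplies exactly the left-ideal property needed for the innermost bracket (when the $I$-entry sits on its left) and the right-ideal property needed for every nesting bracket. The only point requiring care is to keep the boundary cases $s=1$ and $s=n$ separate from the generic case, since for $s=1$ the $I$-entry already occupies the outermost left slot, while for $s=n$ it enters only through $[\mathfrak{L},I]$.
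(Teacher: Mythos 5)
Your argument is correct and is essentially the paper's proof: both locate the innermost bracket containing the $I$-entry, use $[I,\mathfrak{L}]\subseteq I$ (or $[\mathfrak{L},I]\subseteq I$ when $s=n$) there, and then propagate outward through the nested left multiplications by induction using $[\mathfrak{L},I]\subseteq I$. Your explicit recursive notation $u_k=[x_k,u_{k+1}]$ and the separate treatment of the boundary case $s=n$ make the same argument slightly more transparent, but the route is identical.
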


\begin{proof} Let  $s\in$ $\{1,2,\dots,n\}$ be given. Then we have $
[\underbrace{L,\dots,L}_{s-1},I,L,\dots,L]=$ 
\newline $[\underbrace{\mathfrak{L},[\dots,}_{s-1}[I,[\mathfrak{L},[\dots,[\mathfrak{L},[\mathfrak{L},\mathfrak{L}]]\dots]]]\dots]]
		\subseteq  [\underbrace{\mathfrak{L},[\dots,[\mathfrak{L}}_{s-1},[I,\mathfrak{L}]]\dots]] 
		\subseteq  [\underbrace{\mathfrak{L},[\dots,[\mathfrak{L},[\mathfrak{L},}_{s-1} I]]\dots]]$
	due to $[\mathfrak{L},[\dots ,[\mathfrak{L} , [\mathfrak{L},\mathfrak{L}]]\dots]] \subseteq \mathfrak{L}$.
	
	By definition we have  $[\mathfrak{L},I]\subseteq I$, and assuming   $[\underbrace{\mathfrak{L},[\dots,[\mathfrak{L},[\mathfrak{L}}_{k-1}, I]]\dots]] \subseteq I$ leads to 
	$[\underbrace{\mathfrak{L},[\dots,[\mathfrak{L},[\mathfrak{L},}_{k} I]]\dots]] = [\mathfrak{L},[\underbrace{\mathfrak{L},[\dots,[\mathfrak{L},[\mathfrak{L},}_{k-1} I]]\dots]]] \subseteq [\mathfrak{L}, I] \subseteq I$. Hence, by induction we obtain $[\underbrace{L,\dots,L}_{s-1},I,L,\dots,L] \subseteq I$ which concludes that $I$ is an ideal of $\U_n(\mathfrak{L})$.	
\end{proof}

The following example shows that the converse of Proposition \ref{correspondence_ideals} is not true. 

\begin{example}\label{more_ideals_in_CLP} Consider the two-dimensional Leibniz algebra $\mathfrak{L}_2$ with a basis $\{e, f\}$ such that 
	\[
	[e,e] = [e,f] = [f,e] = 0 \textrm{ and }[f,f] = e.
	\]
	$\U_n(\mathfrak{L}_2)$ is an abelian $n$-algebra and any subspace is an ideal of $\U_n(\mathfrak{L}_2)$. However, $\Span\{e\}$ is the only nontrivial ideal of the Leibniz algebra $\mathfrak{L}_2$.
\end{example}

\begin{remark} The category $\U_n(\Lb)$ is a not a full subcategory of ${}_n \Lb$. Indeed, consider Leibniz algebra $\mathfrak{L}_2$ from Example \ref{more_ideals_in_CLP} and note that any linear map from $\U_n(\mathfrak{L}_2)$ to $\U_n(\mathfrak{L}_2)$ is a Leibniz $n$-algebra homomorphism. However, a linear map $\phi:\mathfrak{L}_2\to \mathfrak{L}_2$ defined on the basis elements of $\mathfrak{L}_2$ by $\phi(e)=0, \phi(f)=f$ is not
	a Leibniz algebra homomorphism 	due to 
	$$0=\phi(e)=\phi([f,f]) \neq [\phi(f),\phi(f)]=[f,f]=e.$$
\end{remark}

The following statement describes all simple Leibniz $n$-algebras  in  $\U_n(\Lb)$.  

\begin{theorem}\label{simple_CLP}
	A Leibniz $n$-algebra $\U_n(\mathfrak{L})$ is simple if and only if $\mathfrak{L}$ is a simple Lie algebra. 
\end{theorem}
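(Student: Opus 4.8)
The plan is to prove the two implications separately, leveraging Proposition~\ref{correspondence_ideals} for the forward direction and the perfectness and skew-symmetry of simple Lie algebras for the backward direction. Throughout I write $L=\U_n(\mathfrak{L})$ and use that an ideal of $L$ is in particular a $1$-ideal, i.e. $[V,L,\dots,L]\subseteq V$.

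For the forward implication, suppose $\U_n(\mathfrak{L})$ is simple. First I would note that simplicity includes the hypothesis that $L$ is non-abelian, and since the $n$-ary product is built from iterated Leibniz brackets this already forces $[\mathfrak{L},\mathfrak{L}]\neq 0$, so $\mathfrak{L}$ is non-abelian. Next, by Proposition~\ref{correspondence_ideals} every ideal of the Leibniz algebra $\mathfrak{L}$ is an ideal of $L$; hence simplicity of $L$ forces $\mathfrak{L}$ to have no ideals other than $\{0\}$ and $\mathfrak{L}$. In particular the Leibniz kernel $\Leib(\mathfrak{L})$, being a two-sided ideal, is either $\{0\}$ or $\mathfrak{L}$. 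The relation $[\mathfrak{L},\Leib(\mathfrak{L})]=0$ rules out the case $\Leib(\mathfrak{L})=\mathfrak{L}$, since that would give $[\mathfrak{L},\mathfrak{L}]=0$, contradicting non-abelianness. Therefore $\Leib(\mathfrak{L})=\{0\}$ and $\mathfrak{L}$ is a Lie algebra; a non-abelian Lie algebra whose only ideals are $\{0\}$ and itself is simple, finishing this direction.

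For the backward implication, suppose $\mathfrak{L}$ is a simple Lie algebra. Simple Lie algebras are perfect, so by a short induction on the nesting depth the right-nested bracket $[x_2,[\dots,[x_{n-1},x_n]\dots]]$ ranges over a spanning set of $[\mathfrak{L},[\mathfrak{L},\dots]]=\mathfrak{L}$; in particular $L$ is non-abelian. Now let $V$ be a nonzero ideal of $L$. Applying only the $1$-ideal condition $[V,L,\dots,L]\subseteq V$ together with this spanning property, and using bilinearity, I obtain $[V,\mathfrak{L}]\subseteq V$; since $\mathfrak{L}$ is skew-symmetric, $[\mathfrak{L},V]=-[V,\mathfrak{L}]\subseteq V$ as well. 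Thus $V$ is a two-sided Lie ideal of $\mathfrak{L}$, and simplicity of $\mathfrak{L}$ gives $V=\mathfrak{L}$. Hence $\{0\}$ and $\mathfrak{L}$ are the only ideals of $L$, so $\U_n(\mathfrak{L})$ is simple.

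The manipulations with the iterated brackets are routine bookkeeping. The step I expect to require the most care is the forward direction's elimination of the case $\Leib(\mathfrak{L})=\mathfrak{L}$: here the identity $[\mathfrak{L},\Leib(\mathfrak{L})]=0$ is exactly what collapses a would-be non-Lie simple Leibniz algebra, and it is the precise point where the discrepancy between Definition~\ref{simpleLeibniz} and simplicity of $\U_n(\mathfrak{L})$ matters. Indeed, a simple \emph{non}-Lie Leibniz algebra has $\Leib(\mathfrak{L})$ as a proper nonzero ideal, which by Proposition~\ref{correspondence_ideals} descends to a proper nonzero ideal of $\U_n(\mathfrak{L})$ (cf. the $\mathfrak{sl}_2$-type example mentioned after Proposition~\ref{non_degenrate_then_I=L}); this is why the hypothesis must be that $\mathfrak{L}$ is genuinely Lie, not merely simple.
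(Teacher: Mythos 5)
Your proof is correct and follows essentially the same route as the paper: the forward direction via Proposition~\ref{correspondence_ideals}, and the backward direction by using perfectness of a simple Lie algebra to collapse $[V,L,\dots,L]$ to $[V,\mathfrak{L}]$ and then invoking skew-symmetry. The only difference is that you spell out the step the paper leaves implicit --- that a non-abelian Leibniz algebra with no nontrivial ideals must already be a simple Lie algebra, which follows from $[\mathfrak{L},\Leib(\mathfrak{L})]=0$ --- and this is a worthwhile clarification rather than a new approach.
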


\begin{proof}  	
	Let $L=\U_n(\mathfrak{L})$ be a simple Leibniz $n$-algebra. Then by Proposition \ref{correspondence_ideals} the Leibniz algebra $\mathfrak{L}$ does not admit non-trivial ideals which implies that $\mathfrak{L}$ is a simple Lie algebra. Conversely,  let $\mathfrak{L}$ be a simple Lie algebra and consider $L=\U_n(\mathfrak{L})$. If $I$ is a nontrivial ideal of $L$, then $I \supseteq [I,L,\dots,L]=[I,[\mathfrak{L},[ \dots [\mathfrak{L},[\mathfrak{L},\mathfrak{L}]]\dots]]]=[I,\mathfrak{L}]$. Hence, $I$ is a nontrivial ideal of the simple Lie algebra $\mathfrak{L}$, which is a contradiction. 
\end{proof}

By Proposition \ref{simple_nLie_not_U_n}, simple Leibniz $n$-algebras in $\U_n(\Lb)$ are not $n$-Lie algebras. One may wonder, what happens if simplicity is defined for Leibniz $n$-algebras as in Leibniz algebras, leaving freedom to admit exactly one nontrivial ideal. The following proposition describes all Leibniz $n$-algebras in $\U_n(\Lb)$ that admit exactly one nontrivial ideal. However, that ideal is not the Leibniz $n$-kernel. 
\begin{proposition}\label{simpleLeibniz_CLP}
	A Leibniz $n$-algebra $\U_n(\mathfrak{L})$ admits only one nontrivial ideal if and only if $\mathfrak{L}$ is a simple non-Lie Leibniz algebra or a Lie algebra $\mathfrak{g}\ltimes V$, where $\mathfrak{g}$ is a simple Lie algebra and $V$ is a simple $\mathfrak{g}$-module. The nontrivial ideal of $\U_n(\mathfrak{L})$ is either $\Leib(\mathfrak{L})$ or $V$, correspondingly.
\end{proposition}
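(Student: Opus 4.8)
The plan is to translate assertions about ideals of the Leibniz $n$-algebra $L=\U_n(\mathfrak{L})$ into module-theoretic assertions about $\mathfrak{L}$, using throughout that the $n$-ary bracket is the iterated left multiplication $[x_1,\dots,x_n]=\operatorname{ad}_{x_1}\cdots\operatorname{ad}_{x_{n-1}}(x_n)$. I would extract necessity first: by Proposition \ref{correspondence_ideals} every ideal of $\mathfrak{L}$ is an ideal of $L$, so if $L$ has a unique nontrivial ideal then $\mathfrak{L}$ has at most one nontrivial Leibniz ideal. If $\mathfrak{L}$ is not a Lie algebra, then $\Leib(\mathfrak{L})$ is a nontrivial ideal, hence it is the only one and $\mathfrak{L}$ is a simple non-Lie Leibniz algebra by Definition \ref{simpleLeibniz}. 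If $\mathfrak{L}$ is a Lie algebra it cannot be simple, since then $L$ would be simple by Theorem \ref{simple_CLP} and admit no nontrivial ideal; thus $\mathfrak{L}$ has exactly one nontrivial Lie ideal, and the remaining task is to recognize such $\mathfrak{L}$ as a semidirect product $\mathfrak{g}\ltimes V$.

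For sufficiency I would treat both families at once through the semidirect decomposition $\mathfrak{L}=\mathfrak{g}\ltimes V$ with $\mathfrak{g}$ simple and $V$ a simple $\mathfrak{g}$-module: this is the defining form in the Lie case, and in the non-Lie case it is provided by Corollary \ref{levi_semisimple} together with simplicity (which forces a single simple factor and a single simple module $V=\Leib(\mathfrak{L})$, with $[\mathfrak{g},V]=[V,V]=0$ and $[V,\mathfrak{g}]=V$, whereas in the Lie case $[\mathfrak{g},V]=V=-[V,\mathfrak{g}]$). The first step is to record that the inner bracket spans everything, $\Span\{[x_2,\dots,x_n]\}=\mathfrak{L}$, which follows from $\mathfrak{g}$ being perfect and from $\mathfrak{g}\cdot V=V$ for the nontrivial simple module; consequently the slot-$1$ ideal condition $[W,L,\dots,L]\subseteq W$ collapses to the Leibniz condition $[W,\mathfrak{L}]\subseteq W$. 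In the Lie case this already says $W$ is a Lie ideal, and projecting $W$ onto $\mathfrak{g}$ (whose image is $0$ or all of $\mathfrak{g}$ by simplicity) and then bracketing against $V$ via $\mathfrak{g}\cdot V=V$ shows the ideals are exactly $\{0\},V,\mathfrak{L}$. In the non-Lie case, $[W,\mathfrak{g}]\subseteq W$ makes $W$ a submodule for the right-multiplication action of $\mathfrak{g}$, under which $\mathfrak{g}\oplus V$ is completely reducible with irreducible summands $\mathfrak{g}$ and $V$; the slot-$n$ condition $[L,\dots,L,W]\subseteq W$ then yields the pure element $\operatorname{ad}_{g_1}\cdots\operatorname{ad}_{g_{n-1}}(\pi_{\mathfrak{g}}(w))\in W$, the $V$-component being annihilated because $[\mathfrak{g},V]=0$, which forces $\mathfrak{g}\subseteq W$ whenever $\pi_{\mathfrak{g}}(W)\neq 0$. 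Since $\mathfrak{g}$ is not itself an ideal (placing a $V$-element in the first slot produces a nonzero $V$-component through $[V,\mathfrak{g}]=V$), in that case $W=\mathfrak{L}$, while $\pi_{\mathfrak{g}}(W)=0$ gives $W\subseteq V$ and hence $W\in\{0,V\}$; in both families the unique nontrivial ideal is the claimed $\Leib(\mathfrak{L})$, respectively $V$.

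The hard part is the remaining half of necessity: upgrading ``$\mathfrak{L}$ is a Lie algebra with exactly one nontrivial ideal'' to ``$\mathfrak{L}=\mathfrak{g}\ltimes V$ with $\mathfrak{g}$ simple and $V$ simple.'' I would first argue that $\mathfrak{L}$ must be perfect, because if $[\mathfrak{L},\mathfrak{L}]\subsetneq\mathfrak{L}$ then every subspace between $[\mathfrak{L},\mathfrak{L}]$ and $\mathfrak{L}$ is a Lie ideal and, once $\dim\bigl(\mathfrak{L}/[\mathfrak{L},\mathfrak{L}]\bigr)\geq 2$, this already yields at least two nontrivial ideals; then Levi's theorem for $\mathfrak{L}$ together with the earlier description of ideals of semisimple Leibniz algebras should force the liezation to be simple and the radical to be a single simple module. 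The delicate point I anticipate is that possessing one nontrivial ideal in $\mathfrak{L}$ is necessary but not manifestly sufficient, since passing through $\U_n$ can merge or create ideals; in particular the low-dimensional and codimension-one solvable configurations must be examined directly via the slot conditions rather than through Proposition \ref{correspondence_ideals} alone, and it is precisely this case analysis — ensuring no configuration outside the two advertised families survives — where the real work lies.
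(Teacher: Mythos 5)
Your overall route coincides with the paper's: necessity is extracted through Proposition \ref{correspondence_ideals}, and sufficiency through the decomposition $\mathfrak{L}=\mathfrak{g}\ltimes V$ together with the observation that the slot-$1$ ideal condition makes an ideal of $\U_n(\mathfrak{L})$ a $\mathfrak{g}$-submodule of $\mathfrak{g}\oplus V$. Your sufficiency argument is in fact more careful than the paper's: by also invoking the slot-$n$ condition and the vanishing of $[\mathfrak{g},V]$ to produce the pure element $\operatorname{ad}_{g_1}\cdots\operatorname{ad}_{g_{n-1}}(\pi_{\mathfrak{g}}(w))$, you rule out a possible diagonal submodule when $V$ is isomorphic to the adjoint module, a case the paper's list of candidates ($\mathfrak{g}$ or $V$ only) passes over.

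The genuine gap is exactly the step you flag and then defer: upgrading ``$\mathfrak{L}$ is a Lie algebra and $\U_n(\mathfrak{L})$ has a unique nontrivial ideal'' to ``$\mathfrak{L}=\mathfrak{g}\ltimes V$ with $\mathfrak{g}$ simple and $V$ a simple $\mathfrak{g}$-module.'' Your perfectness argument only disposes of $\dim\bigl(\mathfrak{L}/[\mathfrak{L},\mathfrak{L}]\bigr)\geq 2$, and the codimension-one configuration you postpone does not go away: take the two-dimensional non-abelian Lie algebra $\mathfrak{L}=\Span\{x,y\}$ with $[x,y]=y$. Writing $a_i=\alpha_ix+\beta_iy$, the $n$-ary bracket of $\U_n(\mathfrak{L})$ is $[a_1,\dots,a_n]=\alpha_1\cdots\alpha_{n-2}(\alpha_{n-1}\beta_n-\beta_{n-1}\alpha_n)\,y$, from which one checks directly that $\Span\{y\}$ satisfies all $n$ slot conditions while no other one-dimensional subspace does; thus $\U_n(\mathfrak{L})$ has exactly one nontrivial ideal, yet $\mathfrak{L}$ is neither a simple non-Lie Leibniz algebra nor of the form $\mathfrak{g}\ltimes V$ with $\mathfrak{g}$ simple. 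So the case analysis you propose cannot ``ensure no configuration outside the two advertised families survives''; the only-if direction requires an additional hypothesis (for instance that $\mathfrak{L}$ be perfect, after which your Levi-plus-abelian-radical argument does force $\mathfrak{g}$ simple and $V$ simple and nontrivial). For what it is worth, the paper's own proof leaps over this same step with a bare ``Therefore, $\mathfrak{L}$ is a semi-direct product\dots,'' so you have located a real defect in the statement rather than merely left a routine verification unfinished.
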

\begin{proof} 	Let $I$ be the only ideal of $\U_n(\mathfrak{L})$. By Proposition \ref{correspondence_ideals} it follows that the Leibniz algebra $\mathfrak{L}$ does not admit more than one nontrivial ideal. If $\mathfrak{L}$ is a non-Lie Leibniz algebra, since the Leibniz kernel $\Leib(\mathfrak{L})$ is a non-trivial ideal of $\mathfrak{L}$ it follows that $\mathfrak{L}$ is a simple Leibniz algebra and $I=\Leib(\mathfrak{L})$. If $\mathfrak{L}$ is a Lie algebra, by Theorem \ref{simple_CLP} it is not simple and Proposition \ref{correspondence_ideals} concludes that it admits exactly one ideal. Therefore, $\mathfrak{L}$ is a semi-direct product Lie algebra $\mathfrak{g}\ltimes V$ for some simple Lie algebra $\mathfrak{g}$ and a simple $\mathfrak{g}$-module $V$, and $I=V$.  
	
	Conversely, let $\mathfrak{L}$ be a simple non-Lie Leibniz algebra. By Corollary \ref{levi_semisimple} we have $\mathfrak{L}=\mathfrak{g}\ltimes \Leib(\mathfrak{L})$ and $\Leib(\mathfrak{L})$ is an irreducible $\mathfrak{g}$-module and $\mathfrak{g}$ is a simple Lie algebra. Note that $\mathfrak{L}$ as a $\mathfrak{g}$-module decomposes into the direct sum $\mathfrak{g}\oplus \Leib(\mathfrak{L})$ of simple $\mathfrak{g}$-modules. Let $I$ be an ideal of $\U_n(L)$. Then from  $I\supseteq [I,\mathfrak{g},\dots,\mathfrak{g}]=[I,[\mathfrak{g},[\dots, [\mathfrak{g},[\mathfrak{g},\mathfrak{g}]]\dots]]]=[I,\mathfrak{g}]$ we obtain that $I$ is a $\mathfrak{g}$-submodule of $\mathfrak{L}$. $I$ is a nontrivial module so it must either be $\Leib(\mathfrak{L})$ or $\mathfrak{g}$. However, the latter one is not an ideal of $\U_n(\mathfrak{L})$ due to $[\Leib(\mathfrak{L}), \mathfrak{g},\dots,\mathfrak{g}]=[\Leib(\mathfrak{L}),\mathfrak{g}]=\Leib(\mathfrak{L})$. Therefore, $I=\Leib(\mathfrak{L})$ is the only nontrivial ideal of $L$. Similarly, one obtains $I=V$ in case $\mathfrak{L}$ is a Lie algebra $\mathfrak{g}\ltimes V$.
\end{proof}

Note that, from Theorem \ref{Leib_n(L)=L} it follows that the Leibniz $n$-kernel of $\U_n(\mathfrak{L})$ of a Lie or Leibniz algebra $\mathfrak{L}$ from Proposition \ref{simpleLeibniz_CLP} is equal to the whole $n$-algebra. 

\begin{theorem} The ideals of $\U_n(\mathfrak{L})$ for a semisimple Leibniz algebra $\mathfrak{L}$ are exactly the ideals of $\mathfrak{L}$. 
\end{theorem}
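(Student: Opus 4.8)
The plan is to prove the two inclusions separately. One direction is immediate from Proposition~\ref{correspondence_ideals}: every ideal of $\mathfrak{L}$ is an ideal of $\U_n(\mathfrak{L})$. So the content is the converse, namely that an arbitrary ideal $X$ of $L=\U_n(\mathfrak{L})$ is already a two-sided ideal of the Leibniz algebra $\mathfrak{L}$, i.e.\ that $[X,\mathfrak{L}]\subseteq X$ and $[\mathfrak{L},X]\subseteq X$.

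The main device I would use is to rewrite the $n$-ary bracket as an iterated left multiplication: setting $L_a(b)=[a,b]$, the product of $\U_n(\mathfrak{L})$ is $[a_1,\dots,a_n]=L_{a_1}\cdots L_{a_{n-1}}(a_n)$. Since $\mathfrak{L}$ is semisimple it is perfect, so the nested bracket $[a_2,[\dots,[a_{n-1},a_n]\dots]]$ runs over all of $\mathfrak{L}$ as the $a_i$ vary. Placing $X$ in the first slot, the $1$-ideal condition $[X,L,\dots,L]\subseteq X$ thus collapses to $[X,\mathfrak{L}]\subseteq X$; in particular $X$ is stable under every right multiplication $R_g$, $g\in\mathfrak{g}_L$, hence is a right $\mathfrak{g}_L$-submodule of $\mathfrak{L}$.

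Next I would upgrade this containment to the equality $[X,\mathfrak{L}]=X$. Combining perfectness $\mathfrak{L}=[\mathfrak{L},\mathfrak{L}]$ with $[\mathfrak{L},\Leib(\mathfrak{L})]=0$ gives $\mathfrak{L}=[\mathfrak{L},\mathfrak{g}_L]$, so $\mathfrak{L}$ has no trivial right $\mathfrak{g}_L$-submodule. By Weyl's theorem, $X$ decomposes into simple right $\mathfrak{g}_L$-submodules, each necessarily nontrivial, whence $[X,\mathfrak{g}_L]=X$ and therefore $[X,\mathfrak{L}]=X$. Finally, evaluating the $2$-ideal condition $[L,X,L,\dots,L]\subseteq X$ with the same left-multiplication rewriting (using perfectness on the slots following $X$) yields $[\mathfrak{L},[X,\mathfrak{L}]]\subseteq X$; substituting $[X,\mathfrak{L}]=X$ gives precisely $[\mathfrak{L},X]\subseteq X$. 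Together with $[X,\mathfrak{L}]\subseteq X$ this shows $X$ is an ideal of $\mathfrak{L}$, completing the converse.

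The only genuine obstacle is the left-invariance $[\mathfrak{L},X]\subseteq X$: the $2$-ideal condition a~priori only bounds $[\mathfrak{L},[X,\mathfrak{L}]]$, which is weaker, and the remedy is exactly the equality $[X,\mathfrak{L}]=X$. Producing that equality is where semisimplicity is essential, since it furnishes both perfectness and complete reducibility, thereby excluding the ``diagonal'' right-submodules that could otherwise fail to be closed under left multiplication. I note that the full classification of ideals from the earlier theorem is not needed for this argument.
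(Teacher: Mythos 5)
Your proof is correct, and its overall skeleton coincides with the paper's: one inclusion is Proposition~\ref{correspondence_ideals}; for the converse, perfectness of a semisimple Leibniz algebra collapses the $1$- and $2$-ideal conditions to $[X,\mathfrak{L}]\subseteq X$ and $[\mathfrak{L},[X,\mathfrak{L}]]\subseteq X$; one then upgrades the first containment to the equality $[X,\mathfrak{g}_L]=X$ and substitutes it into the second to get $[\mathfrak{L},X]\subseteq X$. Where you genuinely diverge is in how that equality is obtained. The paper first reduces to the indecomposable case and invokes the bipartite-graph structure theory of semisimple Leibniz algebras to write the ideal as $(\oplus_{i\in B}\mathfrak{g}_i)\oplus(\oplus_{j\in C}I_j)$ and then computes $[H,\mathfrak{g}_L]=H$ term by term; you instead observe that $\mathfrak{L}=[\mathfrak{L},\mathfrak{L}]=[\mathfrak{L},\mathfrak{g}_L]$ (using $[\mathfrak{L},\Leib(\mathfrak{L})]=0$) forces the trivial module to have multiplicity zero in the right $\mathfrak{g}_L$-module $\mathfrak{L}$, so every simple summand of $X$ is nontrivial and $[X,\mathfrak{g}_L]=X$ follows from Weyl's theorem alone. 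Your route is cleaner and arguably more robust: it needs no reduction to the indecomposable case and sidesteps the implicit assumption in the paper that a $\mathfrak{g}_L$-submodule is a sum of the \emph{fixed} summands $\mathfrak{g}_i$, $I_j$ (which could fail for ``diagonal'' submodules when isomorphic summands repeat) — though, as you note, that issue is immaterial here since only the absence of trivial summands is used. One small quibble: your closing remark that semisimplicity ``excludes the diagonal right-submodules'' is not quite what your argument does (complete reducibility does not exclude them); it simply makes them harmless, since the argument applies to any submodule without trivial constituents.
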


\begin{proof} 	The case when $\mathfrak{L}$ is a Lie algebra is straightforward from Proposition \ref{directsum_CLP}. Also, it suffices to consider the case of an indecomposable semisimple Leibniz algebra. Let $I$ be an ideal of $\U_n(\mathfrak{L})$.  Recall that $[I,[\mathfrak{L},[\dots,[\mathfrak{L},[\mathfrak{L},\mathfrak{L}]]\dots]]]\subseteq I$ and $[\mathfrak{L},[I,[\dots,[\mathfrak{L},[\mathfrak{L},\mathfrak{L}]]\dots]]]\subseteq I$.  Since semisimple Leibniz algebras are perfect, then  
	\begin{equation}\label{ideal_condition}
	[I,\mathfrak{L}]\subseteq I \textrm{ and } [\mathfrak{L},[I,\mathfrak{L}]]\subseteq I.
	\end{equation}
	By Proposition \ref{correspondence_ideals} the ideals of $\mathfrak{L}$ are ideals of $\U_n(\mathfrak{L})$.
	
	Assume by contradiction that there exists an ideal $H$ of $\U_n(\mathfrak{L})$ that is not an ideal of $\mathfrak{L}$. Then by (\ref{ideal_condition}) we have $[H, \mathfrak{L}]\subseteq$ $H$ which implies $[H,g_L] \subseteq H$, where $g_L$ is a semisimple Lie algebra (liezation of $\mathfrak{L}$) and $H$ is a $\mathfrak{g}_L$-module. Using  the structure of semisimple Leibniz algebra  we conclude that $H=(\oplus_{i \in I} \ g_i)\oplus(\oplus_{j \in J} \ I_j)$ for some $I\subseteq \{1,\dots, m\}, J\subseteq \{1,\dots, n\}$.  Furthermore, 
	\[
	[H, g_L] = [\oplus_{i\in I} \mathfrak{g}_i , \oplus_{i=1}^m \mathfrak{g}_i] \oplus [\oplus_{j\in J} I_j , \oplus_{i=1}^m  g_i] = (\oplus_{i\in I} g_i) \oplus (\oplus_{j\in J} I_j) = H.
	\]
	Thus, $H \supseteq [\mathfrak{L},[H, \mathfrak{L}]]=[\mathfrak{L},[H, \mathfrak{g}_L]]=[\mathfrak{L},H]$  and $[H, \mathfrak{L}] \subseteq H$ implies $H$ is an ideal of $\mathfrak{L}$ which is a contradiction.  
\end{proof}

Following Ling \cite{Ling} we extend the following notions from $n$-Lie algebras to Leibniz $n$-algebras. 

\begin{definition} Leibniz $n$-algebra $L$ is called $k$-semisimple if $\Rad_k(L)=\{0\}$. If $L$ is $n$-semisimple it is called semisimple. 
\end{definition}

Let $\mathfrak{L}=\mathfrak{g}\ltimes \rad(\mathfrak{L})$ be Levi decomposition of the Leibniz algebra $\mathfrak{L}$. We have the following analogue of Levi decomposition for $n$-algebras from the category $\U_n(\Lb)$. 

\begin{theorem}  Let $\mathfrak{L}$ be a Leibniz algebra and $\mathfrak{g}\ltimes \rad(\mathfrak{L})$ be Levi decomposition. Let $\mathfrak{g}_i$ be simple Lie subalgebras from the decomposition $\mathfrak{g}=\oplus_{i=1}^m \mathfrak{g_i}$. Then in Leibniz $n$-algebra $\U_n(\mathfrak{L})$, the $k$-solvable radical coincides with $\rad(\mathfrak{L})$ for all $2\leq k \leq n$  and there is the following analogue of the Levi decomposition
	$$\U_n(\mathfrak{L})=(\oplus_{i=1}^n \ \U_n(\mathfrak{g_i}))+ \Rad(\U_n(\mathfrak{L})).$$
\end{theorem}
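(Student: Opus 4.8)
The plan is to prove the two assertions in turn, establishing first the identification $\Rad_k(\U_n(\mathfrak{L}))=\rad(\mathfrak{L})$ for $2\le k\le n$, from which the decomposition follows almost formally. Throughout write $R=\rad(\mathfrak{L})$ and $L=\U_n(\mathfrak{L})$, and recall the Leibniz derived series $R^{(0)}=R$, $R^{(t)}=[R^{(t-1)},R^{(t-1)}]$.

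For the inclusion $R\subseteq \Rad_k(L)$, I would show that $R$ is a $2$-solvable ideal of $L$; by Proposition \ref{k<k' solvable} this forces $R$ to be $k$-solvable for every $k\ge 2$, and since $R$ is an ideal of $L$ by Proposition \ref{correspondence_ideals}, it lies in the maximal $k$-solvable ideal $\Rad_k(L)$. To see $2$-solvability, I would first note that a routine application of the Leibniz identity shows each derived term $R^{(t)}$ is again an ideal of $\mathfrak{L}$. The key combinatorial step is then the following: any bracket $[x_1,\dots,x_n]=[x_1,[\dots,[x_{n-1},x_n]\dots]]$ in which two of the entries lie in an ideal $R^{(t)}$ and the remaining entries lie in $\mathfrak{L}$ already lies in $R^{(t+1)}$. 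Processing the nested bracket from the innermost pair outward, the deeper of the two special entries makes the accumulated element land in $R^{(t)}$ by ideal absorption; when the second special entry is reached one forms an element of $[R^{(t)},R^{(t)}]=R^{(t+1)}$, after which absorption by the ideal $R^{(t+1)}$ keeps the value in $R^{(t+1)}$. Applying this with the two entries taken from $R^{(m)_2}$ and using induction on $m$ gives $R^{(m)_2}\subseteq R^{(m-1)}$, so the Leibniz solvability $R^{(s)}=0$ yields $R^{(s+1)_2}=0$.

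For the reverse inclusion $\Rad_k(L)\subseteq R$, I would pass to the quotient. Since $R$ is an ideal of $L$, the quotient $n$-algebra $L/R$ is $\U_n(\mathfrak{L}/R)$, and Barnes' theorem gives $\mathfrak{L}/R\cong\mathfrak{g}=\oplus_i\mathfrak{g}_i$, so $L/R\cong\U_n(\mathfrak{g})=\oplus_i\U_n(\mathfrak{g}_i)$ by Proposition \ref{directsum_CLP}. I claim $\U_n(\mathfrak{g})$ has no nonzero $k$-solvable ideal. As $\mathfrak{g}$ is a semisimple (indeed Lie) Leibniz algebra, the preceding theorem on ideals of $\U_n$ of a semisimple Leibniz algebra identifies the ideals of $\U_n(\mathfrak{g})$ with the ideals of $\mathfrak{g}$, that is, the partial sums $\oplus_{i\in S}\mathfrak{g}_i$. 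Each such nonzero ideal $J$ is a perfect Lie algebra, so the $\U_n$-bracket satisfies $[J,\dots,J]=J$ and hence $J^{(m)_n}=J$ for all $m$; thus $J$ is not $n$-solvable and, by Proposition \ref{k<k' solvable}, not $k$-solvable. Therefore $\Rad_k(\U_n(\mathfrak{g}))=0$. The quotient map carries the $k$-solvable ideal $\Rad_k(L)$ onto a $k$-solvable ideal of $\U_n(\mathfrak{g})$, which must vanish, so $\Rad_k(L)\subseteq R$. Combining the two inclusions gives $\Rad_k(L)=\rad(\mathfrak{L})$ for all $2\le k\le n$, and in particular $\Rad(L)=\rad(\mathfrak{L})$.

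Finally, for the decomposition I would invoke the vector-space Levi decomposition $\mathfrak{L}=\mathfrak{g}\oplus\rad(\mathfrak{L})$, under which the Lie subalgebra $\mathfrak{g}=\oplus_i\mathfrak{g}_i$ yields the Leibniz $n$-subalgebra $\U_n(\mathfrak{g})=\oplus_i\U_n(\mathfrak{g}_i)$ by Proposition \ref{directsum_CLP}, while $\rad(\mathfrak{L})=\Rad(L)$ by the case $k=n$ above; this gives $\U_n(\mathfrak{L})=(\oplus_i\U_n(\mathfrak{g}_i))+\Rad(\U_n(\mathfrak{L}))$. The main obstacle I anticipate is the first inclusion: setting up the bookkeeping of positions in the nested bracket carefully enough to confirm that two radical entries descend one step in the derived series, and checking that every derived term $R^{(t)}$ remains an ideal of $\mathfrak{L}$ so that the absorption argument is valid at each stage.
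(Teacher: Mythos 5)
Your argument is correct, and the first half is essentially the paper's: the paper likewise reduces $2$-solvability of $\rad(\mathfrak{L})$ inside $\U_n(\mathfrak{L})$ to the observation that an $n$-fold bracket with two entries in an ideal $I$ collapses, via the nested-bracket form and ideal absorption, into $[I,I]$ (giving $I^{(m)_2}\subseteq I^{(m)}$ there versus your $R^{(m)_2}\subseteq R^{(m-1)}$ --- the shift is immaterial), after which Proposition \ref{k<k' solvable} upgrades this to $k$-solvability for all $k\geq 2$. Where you diverge is the reverse inclusion $\Rad_k(L)\subseteq\rad(\mathfrak{L})$: the paper stays inside $L$, noting that $\Rad_k(L)$ is a $\mathfrak{g}$-submodule and, by Weyl semisimplicity, would have to contain some simple summand $\mathfrak{g}_i$ if it exceeded $\rad(\mathfrak{L})$, contradicting $[\mathfrak{g}_i,\dots,\mathfrak{g}_i]=\mathfrak{g}_i$; you instead pass to the quotient $L/\rad(\mathfrak{L})\cong\U_n(\mathfrak{g})$, invoke the classification of its ideals as partial sums $\oplus_{i\in S}\mathfrak{g}_i$, and use that homomorphic images of $k$-solvable ideals are $k$-solvable. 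The obstruction is the same in both versions (perfectness of the simple summands), but your route trades the representation-theoretic step for the earlier theorem on ideals of $\U_n$ of a semisimple Leibniz algebra plus a standard quotient argument, which is arguably cleaner and makes explicit that $\Rad_k(\U_n(\mathfrak{g}))=0$; the paper's version is more self-contained at this point in the text. Your concluding derivation of the decomposition from $\Rad(L)=\rad(\mathfrak{L})$ and Proposition \ref{directsum_CLP} matches the paper's.
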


\begin{proof} 	Let us prove first that for an ideal $I$ of $L$, $I^{(m)_2} \subseteq I^{(m)}$. For $m=2$,
	\begin{align*}
		I^{(2)_{2}} = & [I,I,L,\dots] + [I,L,I,L,\dots] + [L,I,I,L,\dots] + \dots + [L,\dots,I,I] \\
		\subseteq & [I,[I,\mathfrak{L}]] + [I,[\mathfrak{L},I]] + [\mathfrak{L},[I,I]] \subseteq [I,I] = I^{(2)}.
	\end{align*} 
	Assume $I^{(m-1)_{2}} \subseteq I^{(m-1)}$, then $I^{(m)_{2}} \subseteq [I^{(m-1)_{2}},I^{(m-1)_{2}}] $ follows similarly and inductively we obtain $I^{(m)_{2}} \subseteq I^{(m)}$ for all positive integers $m$. 
	
	If $m$ is the index of solvability of  $\rad(\mathfrak{L})$, then $\rad(\mathfrak{L})^{(m)_{2}} \subseteq \rad(\mathfrak{L})^{(m)} = \{0\}$ and by Proposition \ref{k<k' solvable} it follows that $\rad(\mathfrak{L})$ is a $k$-solvable ideal of $L$ for all $2\leq k \leq n$. Hence, $\rad(\mathfrak{L})\subseteq \Rad_k(L)$ for all $2\leq k \leq n$. Let us prove the converse inclusion and that all radicals coincide. Since $L$ is a $\mathfrak{g}$-module and $\Rad_k(L)$ is an ideal of $L$ we obtain that $\Rad_k(L)$ is a $\mathfrak{g}$-submodule of $L$. Let $\oplus_{i=1}^n \mathfrak{g}_i$ be the decomposition of $\mathfrak{g}$ into a direct sum of simple Lie subalgebras. Note that if $\mathfrak{g}\cap \Rad_k(L)\neq \{0\}$ then $\mathfrak{g}_i \subseteq \Rad_k(L)$ for some $1\leq i \leq n$. However, since $[\mathfrak{g_i},\mathfrak{g_i},\dots, \mathfrak{g_i}]=\mathfrak{g_i}$ this is a contradiction with solvability of the radical of $L$. Thus $\Rad_k(L)=\rad(\mathfrak{L})$ for all $2\leq k \leq n$. 
	
	As a vector space, $L=\mathfrak{L}=\mathfrak{g}\oplus \rad(\mathfrak{L})=\U_n(\mathfrak{g})\oplus \Rad(L)$. Moreover, by Proposition \ref{directsum_CLP} we obtain that $\U_n(\mathfrak{g})$ is a direct sum of simple Leibniz $n$-algebras $\U_n(\mathfrak{g}_i)$s and we obtain the desired decomposition.
\end{proof}

For a semisimple $\U_n(\mathfrak{L})$ Leibniz $n$-algebra, by definition the solvable radical is zero and by the result above, it follows that $\U_n(\mathfrak{L})$ is a direct sum of simple Leibniz $n$-algebras. 
\begin{conjecture} 
	A semisimple Leibniz n-algebra ($n\geq 3$) is a direct sum of simple Leibniz n-algebras.
\end{conjecture}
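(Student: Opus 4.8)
The plan is to argue by induction on $\dim L$, reducing the entire statement to a single splitting property: that every minimal ideal of a semisimple Leibniz $n$-algebra admits a complementary ideal. Suppose we knew that for a semisimple $L$ and a minimal ideal $M$ there is an ideal $N$ with $L=M\oplus N$, a direct sum of Leibniz $n$-algebras in the sense that any bracket with arguments drawn from both summands vanishes. Then $N$ is again semisimple: since $N$ is a direct summand, every solvable ideal of $N$ is a solvable ideal of $L$ and hence lies in $\Rad(N)\subseteq\Rad(L)=\{0\}$, so the inductive hypothesis applies to $N$. Moreover $M$ is simple: in a direct sum the brackets $[M,L,\dots,L]$ feel only the $M$-components, so the ideals of the Leibniz $n$-algebra $M$ are exactly the ideals of $L$ contained in $M$, and minimality forces $M$ to have no proper nonzero ideal; $M$ is non-abelian because a minimal ideal of a semisimple $L$ cannot be solvable (otherwise it would lie in $\Rad(L)=\{0\}$). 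Assembling the decomposition of $N$ produced by induction together with $M$ then exhibits $L$ as a direct sum of simple Leibniz $n$-algebras.

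Everything therefore hinges on producing the complement, which is a complete-reducibility statement. First I would form the multiplication Lie algebra $\mathcal{M}(L)$, generated inside $\mathrm{End}(L)$ by all partial multiplication operators $z\mapsto[x_1,\dots,x_{s-1},z,x_{s+1},\dots,x_n]$; by the defining identity these operators are derivations, so the Lie algebra they generate is well defined, and a subspace of $L$ is an ideal of $L$ precisely when it is $\mathcal{M}(L)$-invariant. A complementary ideal to $M$ is then nothing but a complementary $\mathcal{M}(L)$-submodule, and so it suffices to prove that $L$ is a completely reducible $\mathcal{M}(L)$-module whenever $L$ is semisimple.

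To obtain complete reducibility I would imitate the Lie-theoretic route through Cartan's criterion. Define the trace form $\kappa(x,y)=\mathrm{tr}\bigl(R[x]R[y]\bigr)$ on $L^{\otimes(n-1)}$, where $R[x]$ denotes right multiplication, and aim to prove a Cartan-type theorem for Leibniz $n$-algebras: that $L$ is semisimple if and only if the radical of $\kappa$, transported into $\mathcal{M}(L)$, is trivial, generalizing Kasymov's criterion \cite{Kasymov} for $n$-Lie algebras. Nondegeneracy of $\kappa$ would permit the construction of a Casimir-type operator commuting with the $\mathcal{M}(L)$-action, after which Weyl's averaging argument splits off $M$ and closes the induction.

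The main obstacle is precisely this Cartan-type criterion, and it is the failure of skew-symmetry that keeps the statement conjectural. For $n$-Lie algebras one reduces structural questions to the skew-symmetric setting and invokes the classification of Ling \cite{Ling} and Kasymov \cite{Kasymov}; here the Leibniz $n$-kernel $\Leib_n(L)$ genuinely obstructs this, since by Theorem \ref{Leib_n(L)=L} it can be the whole of $L$, in which case the $n$-Lie quotient $L/\Leib_n(L)$ vanishes and retains no information. One therefore cannot transfer the $n$-Lie structure theory through the quotient, and the nondegeneracy of $\kappa$ must instead be established intrinsically, controlling the interaction between $\Leib_n(L)$ and the semisimple part. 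Proving that this trace form is nondegenerate exactly when $\Rad(L)=\{0\}$ is the crux on which the conjecture rests.
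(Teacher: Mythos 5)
The statement you are trying to prove is labelled a \emph{conjecture} in the paper: the authors give no proof of it, observing only that the $n$-Lie analogue is Ling's Proposition~2.7 and that the special case $L=\U_n(\mathfrak{L})$ follows from their Levi-type decomposition theorem (if $\Rad(\U_n(\mathfrak{L}))=\{0\}$ then $\rad(\mathfrak{L})=\{0\}$, so $\mathfrak{L}=\oplus_i\mathfrak{g}_i$ is a semisimple Lie algebra and $\U_n(\mathfrak{L})=\oplus_i\U_n(\mathfrak{g}_i)$ is a direct sum of simple Leibniz $n$-algebras by their Propositions on direct sums and simplicity). Your proposal is therefore not being measured against a proof in the paper, and, to your credit, you do not claim to have one: you explicitly flag that the Cartan-type criterion is unproven. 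But as a proof attempt it has a genuine gap, and the gap is the whole theorem. The reduction to ``every minimal ideal of a semisimple $L$ splits off'' is standard and sound (your verifications that $N$ inherits semisimplicity and that $M$ is simple are fine for a direct sum with vanishing mixed brackets), but the splitting itself --- complete reducibility of $L$ under the multiplication algebra, via nondegeneracy of a trace form and a Casimir operator --- is exactly what is not known for Leibniz $n$-algebras, and you rightly identify that the usual route through the $n$-Lie quotient is blocked because $\Leib_n(L)$ can equal $L$ (Theorem~2.7 of the paper). So the argument reduces the conjecture to another open statement rather than proving it.

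Two technical points in the sketch also need care. First, the partial multiplication operators $z\mapsto[x_1,\dots,x_{s-1},z,x_{s+1},\dots,x_n]$ are \emph{not} all derivations: the fundamental identity only makes the right multiplications $R[y]\colon z\mapsto[z,y_1,\dots,y_{n-1}]$ (argument in the first slot) derivations. The Lie algebra $\mathcal{M}(L)$ they generate is of course still well defined, and its invariant subspaces are the ideals, but any Weyl-type averaging argument that relies on $\mathcal{M}(L)$ acting by derivations would need the action restricted to $R[L]$, and then invariance under $R[L]$ alone only yields $1$-ideals, not ideals. Second, your form $\kappa$ lives on $L^{\otimes(n-1)}$, not on $L$ or on $\mathcal{M}(L)$, so ``nondegeneracy of $\kappa$'' does not directly produce a Casimir element commuting with the $\mathcal{M}(L)$-action; relating $\kappa$ to an invariant form on the multiplication Lie algebra is an additional unresolved step. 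None of this invalidates the strategy as a research programme --- it is essentially Ling's programme transported to the non-skew-symmetric setting --- but the statement remains a conjecture after your argument, just as it does in the paper.
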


For $n$-Lie algebras this is \cite[Proposition 2.7]{Ling}. The structure of semisimple Leibniz algebra presented in Preliminaries shows why we exclude $n=2$ in this conjecture.

\section*{Acknowledgments}
We would like to thank Xabier Garc\'ia-Mart\'inez for reviewing the early version of the manuscript and providing useful comments.

\end{document}